\newcommand{\set}[1]{\left\{ #1 \right\}}
\newcommand{\abs}[1]{\left| #1 \right|}
\newcommand{\wt}[1]{\widetilde{ #1}}
\newcommand{\ol}[1]{\overline{#1}}
\newcommand{\mbar}{\ol{m}}
\newcommand{\wtm}{\wt{m}}
\newcommand{\ybar}{\ol{y}}
\newcommand{\wtx}{\wt{x}}
\newcommand{\wty}{\wt{y}}
\DeclareMathOperator{\cls}{cls}
\DeclareMathOperator{\Ext}{Ext}
\DeclareMathOperator{\opH}{H}
\newcommand{\Hbul}{\opH^\bullet}
\DeclareMathOperator{\Hom}{Hom}
\DeclareMathOperator{\id}{id}
\DeclareMathOperator{\im}{im}
\DeclareMathOperator{\Lie}{Lie}
\DeclareMathOperator{\Tor}{Tor}
\newcommand{\ve}{\varepsilon}
\newcommand{\C}{\mathbb{C}}
\newcommand{\F}{\mathbb{F}}
\newcommand{\N}{\mathbb{N}}
\newcommand{\Q}{\mathbb{Q}}
\newcommand{\Z}{\mathbb{Z}}
\newcommand{\Fp}{\F_p}
\newcommand{\ca}{\mathcal{A}}
\newcommand{\cb}{\mathcal{B}}
\newcommand{\cz}{\mathcal{Z}}
\newcommand{\g}{\mathfrak{g}}
\newcommand{\gfp}{\g_{\Fp}}
\newcommand{\A}{\mathsf{A}}
\newcommand{\B}{\mathsf{B}}
\newcommand{\oa}{\otimes_\A}
\newcommand{\oca}{\otimes_\ca}
\newcommand{\ocb}{\otimes_\cb}
\newcommand{\ocz}{\otimes_\cz}
\newcommand{\Ce}{\C_\ve}
\newcommand{\Qe}{\Q(\ve)}
\newcommand{\Cg}{\C_\g}
\newcommand{\Cq}{\C(q)}
\newcommand{\Ua}{U_\A}
\newcommand{\Uca}{U_\ca}
\newcommand{\Ucb}{U_\cb}
\newcommand{\Ucz}{U_\cz}
\newcommand{\Ue}{U_\ve}
\newcommand{\Ug}{U(\g)}
\newcommand{\Uq}{U_q}
\newcommand{\Uqg}{\Uq(\g)}
\newtheorem{theorem}{Theorem}[section]
\newtheorem{lemma}[theorem]{Lemma}
\newtheorem{corollary}[theorem]{Corollary}
\newtheorem{proposition}[theorem]{Proposition}
\newtheorem{conjecture}[theorem]{Conjecture}
\theoremstyle{definition}
\theoremstyle{remark}
\numberwithin{equation}{section}
\title{Cohomology rings for Quantized Enveloping Algebras}
\author{Christopher M.\ Drupieski}
\address{Department of Mathematics\\ University of Georgia \\ Athens\\ GA~30602-7403, USA}
\thanks{The author was supported in part by NSF VIGRE grant DMS-0738586}
\email{cdrup@math.uga.edu}
\subjclass[2010]{Primary 17B37, 17B56.}
\begin{document}

\begin{abstract}
We compute the structure of the cohomology ring for the quantized enveloping algebra (quantum group) $U_q$ associated to a finite-dimensional simple complex Lie algebra $\g$. We show that the cohomology ring is generated as an exterior algebra by homogeneous elements in the same odd degrees as generate the cohomology ring for the Lie algebra $\g$. Partial results are also obtained for the cohomology rings of the non-restricted quantum groups obtained from $U_q$ by specializing the parameter $q$ to a non-zero value $\ve \in \C$.
\end{abstract}

\maketitle

\section{Introduction}

\subsection{}

Let $G$ be a simple compact connected Lie group of dimension $d$. It is a famous theorem from algebraic topology that the homology and cohomology algebras for $G$ are exterior algebras over graded subspaces concentrated in odd degrees \cite{Samelson:1952}. By a result of Cartan, the homology and cohomology algebras for $G$ identify with those for its Lie algebra $\g$, so we get also the ring structure of the Lie algebra cohomology ring $\Hbul(\g,\C) = \Hbul(\Ug,\C)$. Here $\Ug$ denotes the universal enveloping algebra of $\g = \Lie(G)$. In recent years there has been much interest in homological and cohomological properties for various classes of noetherian Hopf algebras \cite{Brown:1997,Brown:2008,Chemla:2004}, important examples of which being the universal enveloping algebras and quantized enveloping algebras associated to a finite-dimensional simple complex Lie algebra. A common theme to some of the recent work has been the desire to generalize Poincar\'{e} duality to these classes of noetherian Hopf algebras \cite{Brown:2008,Kowalzig:2010}.

Let $q$ be an indeterminate, and set $k = \Cq$. Let $\Uq$ be the quantized enveloping algebra over $k$ associated to the finite-dimensional simple complex Lie algebra $\g$. Though the above cited works provide general results relating the dimensions of the homology and cohomology groups
\[
\opH_n(\Uq,k) = \Tor_n^{\Uq}(k,k) \qquad \text{and} \qquad \opH^n(\Uq,k) = \Ext_{\Uq}^n(k,k),
\]
namely, $\dim_k \opH^n(\Uq,k) = \dim_k \opH_{d-n}(\Uq,k)$, there have been no explicit calculations of the dimensions of these groups, nor of the ring structure for the cohomology ring $\Hbul(\Uq,k)$. Similarly, one would like to know the dimension and ring structure of the cohomology ring $\Hbul(\Ue,\C)$ associated to the quantized enveloping algebra $\Ue$ with parameter $q$ specialized to a value $\ve \in \C^\times := \C - \set{0}$.

In this paper we show that the cohomology ring $\Hbul(\Uq,k)$ is an exterior algebra generated by homogeneous elements in the same odd degrees as for $\Hbul(\Ug,\C)$, and thus for each $n \in \N$ that the cohomology group $\opH^n(\Uq,k)$ for $\Uq$ is of the same dimension as the corresponding group for $\Ug$. Our proof relies on an integral form $\Ua$ for $\Uq$, which enables us to relate, via the universal coefficient theorem, cohomology for $\Uq$ to that for $\Ug$. The main steps of this argument are carried out in Sections \ref{subsection:Uafree} and \ref{subsection:ringstructure}. A key step in the proof is the calculation of the restriction maps in Lie algebra cohomology associated to an inclusion $F \subset E$ of simple Lie algebras; see Section \ref{subsection:restriction}. Finally, assuming $\ve \in \C$ is a root of unity of sufficiently large prime order $p$, we obtain the structure of the cohomology ring $\Hbul(\Ue,\C)$ for the quantized enveloping algebra $\Ue$. This last computation exploits a connection between $\Ue$ and the characteristic $p$ universal enveloping algebra of $\g$.

\subsection{Notation} \label{subsection:notation}

Let $\N = \set{0,1,2,3,\ldots}$ denote the set of non-negative integers. Let $(a_{ij})$ be the $r \times r$ Cartan matrix associated to the finite-dimensional simple complex Lie algebra $\g$, and let $(d_1,\ldots,d_r) \in \N^r$ be the unique vector such that $\gcd(d_1,\ldots,d_r) =1$ and the matrix $(d_ia_{ij})$ is symmetric.  The ordering of the rows and columns for the matrix $(a_{ij})$ corresponds to a labeling of the Dynkin diagram associated to $\g$; we assume this is done as in Bourbaki \cite[Plates I--IX]{Bourbaki:2002}.

Let $\C[q,q^{-1}]$ be the Laurent polynomial ring over $\C$ in the indeterminate $q$, and let $k = \Cq$ be its quotient field. Then the quantized enveloping algebra (or quantum group) associated to $\g$ is the $k$-algebra $\Uq = \Uqg$ defined by the generators $\{ E_i,F_i,K_i^{\pm 1}: i \in [1,r] \}$ and the relations given in \cite[(1.2.1--1.2.5)]{De-Concini:1990}. The algebra $\Uq$ is also a Hopf algebra via the maps in \cite[(1.2.6--1.2.8)]{De-Concini:1990}.

For $n \in \Z$ and $d \in \N$, put $[n]_d = (q^{nd}-q^{-nd})/(q^d-q^{-d}) \in \Z[q,q^{-1}]$. Given $\ell \in \N$, let $\phi_\ell \in \Z[q]$ be the $\ell$-th cyclotomic polynomial. Now define $S \subset \Z[q,q^{-1}]$ to be the multiplicatively closed set generated by
\begin{align}
&\set{1} & \text{if $\g$ has Lie type $ADE$}, \notag \\
&\set{\phi_4,\phi_8} & \text{if $\g$ has Lie type $BCF$}, \label{eq:badcyclotomics} \\
&\set{\phi_3,\phi_4,\phi_6,\phi_9,\phi_{12},\phi_{18}} & \text{if $\g$ has Lie type $G_2$}. \notag
\end{align}
Then the generators for $S$ are precisely the irreducible factors of $[n]_{d_i}$ in $\Z[q,q^{-1}]$ when $1 \leq n \leq \abs{a_{ij}}$ and $i \neq j$. Set $\cz = S^{-1} \Z[q,q^{-1}]$ and $\ca = S^{-1}\C[q,q^{-1}]$, the localizations of $\Z[q,q^{-1}]$ and $\C[q,q^{-1}]$ at $S$.

Let $\Ucz$ be the $\cz$-subalgebra of $\Uq$ generated by the set $\{E_i,F_i,K_i^{\pm 1}: i \in [1,r]\}$. This algebra is the De Concini--Kac integral form of $\Uq$ (over $\cz$). Given a $\cz$-algebra $B$, set $U_B = \Ucz \ocz B$. In particular, $\Uca$ is the $\ca$-subalgebra of $\Uq$ generated by $\{E_i,F_i,K_i^{\pm 1}: i \in [1,r]\}$. Given $\ve \in \C^\times$ with $f(\ve) \neq 0$ for all $f \in S$, write $\Ce$ for the field $\C$ considered as an $\ca$-algebra via the map $q \mapsto \ve$, and set $\Ue = \Uca \oca \Ce \cong \Uca/(q-\ve)\Uca$. Equivalently, $\Ue \cong \Ucz \otimes_\cz \Ce$. If $\ve^{2d_i} \neq 1$ for all $i \in [1,r]$, then $\Ue$ is the $\C$-algebra with the same generators and relations as $\Uq$, but with $q$ replaced by $\ve$. For this reason, we call $\Ue$ a specialization of $\Uq$.

For each $i \in [1,r]$, let $T_i$ be the braid group operator on $\Uq$ as defined in \cite[\S 1.6]{De-Concini:1990}, and let $\Phi$ be the root system associated to $\g$. Then for each positive root $\beta \in \Phi^+$, there exist root vectors $E_\beta,F_\beta \in \Uq$, defined in terms of the $T_i$ \cite[\S 1.7]{De-Concini:1990}. By the definition of the denominator set $S$, the $T_i$ restrict to automorphisms of the algebra $\Ucz$, so also $E_\beta,F_\beta \in \Ucz \subset \Uca$. (This is why we work with the coefficient rings $\cz$ and $\ca$ instead of with $\Z[q,q^{-1}]$ and $\C[q,q^{-1}]$.)

\section{Lie algebra cohomology}

\subsection{An isomorphism with \texorpdfstring{$\mathbf{U_1}$}{U1} cohomology}

We begin with an observation on the relationship between the cohomology spaces for the universal enveloping algebra $\Ug$ and for the specialization $U_1$. Recall from \cite[Proposition 1.5]{De-Concini:1990} that $U_1$ is a central extension of $\Ug$ by the group algebra over $\C$ for the finite group $(\Z/2\Z)^r$, so there exists a surjective Hopf algebra homomorphism $U_1 \rightarrow \Ug$.

\begin{lemma} \label{lemma:U1toUgiso}
The homomorphism $U_1 \rightarrow \Ug$ induces an algebra isomorphism
\[
\Hbul(\Ug,\C) \stackrel{\sim}{\rightarrow} \Hbul(U_1,\C).
\]
\end{lemma}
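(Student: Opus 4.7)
The plan is to apply the Lyndon--Hochschild--Serre spectral sequence for the central Hopf-algebra extension
\[
\C[(\Z/2\Z)^r] \hookrightarrow U_1 \twoheadrightarrow \Ug
\]
provided by \cite{De-Concini:1990}. Write $\A' = \C[(\Z/2\Z)^r]$ for the central Hopf subalgebra; one has $U_1 / (\A'^+ \cdot U_1) \cong \Ug$, and $U_1$ is free over $\A'$ (visible from the PBW-type basis of $U_1$), so the standard Hopf-algebra LHS machinery yields a convergent, multiplicative spectral sequence
\[
E_2^{p,q} = \opH^p\bigl(\Ug,\,\opH^q(\A',\C)\bigr) \Rightarrow \opH^{p+q}(U_1,\C).
\]

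Since $(\Z/2\Z)^r$ is a finite group and $\chr \C = 0$, Maschke's theorem implies that $\A'$ is semisimple, so $\Ext_{\A'}^q(\C,\C) = 0$ for $q > 0$ and equals $\C$ for $q = 0$. Moreover, because $\A'$ lies in the center of $U_1$, the induced action of $\Ug$ on $\opH^\bullet(\A',\C)$ (arising from the conjugation action of $U_1$, which factors through $\Ug$) is trivial. Therefore only the $q = 0$ row of $E_2$ is nonzero, with $E_2^{p,0} = \opH^p(\Ug,\C)$, and the spectral sequence collapses at $E_2$. The resulting edge map $\opH^p(\Ug,\C) \to \opH^p(U_1,\C)$ is exactly the inflation homomorphism induced by the Hopf surjection $U_1 \to \Ug$, which is the map appearing in the lemma.

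To promote this to a \emph{ring} isomorphism, I would invoke the multiplicativity of the LHS spectral sequence: since it collapses to a single column, the induced filtration on $\opH^\bullet(U_1,\C)$ has only one nontrivial step, so $E_\infty = \opH^\bullet(U_1,\C)$ holds as graded $\C$-algebras and the edge map is a graded algebra isomorphism. The main obstacle is entirely formal, namely verifying the hypotheses for the Hopf-algebra LHS spectral sequence in this setting: that $U_1$ is faithfully flat over the central Hopf subalgebra $\A'$, that the quotient is $\Ug$, and that the conjugation action on the fiber cohomology is trivial. Once these are in hand, the semisimplicity of $\A'$ makes the computation immediate.
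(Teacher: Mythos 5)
Your argument is essentially identical to the paper's: both invoke the Lyndon--Hochschild--Serre spectral sequence for the normal (central) Hopf subalgebra $\C[(\Z/2\Z)^r] \subset U_1$, use semisimplicity of the group algebra in characteristic zero to collapse the spectral sequence onto the bottom row, and conclude via multiplicativity that the edge map is an algebra isomorphism. The extra hypothesis-checking you flag (freeness over the central subalgebra, triviality of the conjugation action) is sensible diligence but does not change the route.
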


\begin{proof}
Set $G=(\Z/2\Z)^r$, and consider the Lyndon--Hochschild--Serre (LHS) spectral sequence for the algebra $U_1$ and its normal Hopf-subalgebra isomorphic to $\C G$:
\[
E_2^{i,j} = \opH^i(\Ug,\opH^j(\C G,\C)) \Rightarrow \opH^{i+j}(U_1,\C).
\]
The group algebra $\C G$ is a semisimple algebra, so $E_2^{i,j} = 0$ for all $j > 0$. Since the spectral sequence respects cup products, it follows that the edge map $E_2^{\bullet,0} = \Hbul(\Ug,\C) \rightarrow \Hbul(U_1,\C)$ is an algebra isomorphism.
\end{proof}

\subsection{The structure of Lie algebra cohomology}

Lemma \ref{lemma:U1toUgiso} reduces the problem of studying the cohomology ring $\Hbul(U_1,\C)$ to the classical problem of studying the cohomology ring $\Hbul(\Ug,\C)$. We summarize some details on the computation of $\Hbul(E,\C) = \Hbul(U(E),\C)$ for $E$ an arbitrary finite-dimensional reductive Lie algebra over $\C$. Our main reference is \cite[Chapters V--VI]{Greub:1976}.

Let $\Lambda^\bullet(E^*)$ denote the exterior algebra on the dual space $E^* = \Hom_\C(E,\C)$, considered as a graded complex with $E^*$ concentrated in degree $1$. The map $\Lambda^2(E) \rightarrow E$ defined by $x \wedge y \mapsto [x,y]$ induces a map $d:E^* \rightarrow \Lambda^2(E^*)$, which extends by derivations to a differential on $\Lambda^\bullet(E^*)$, also denoted $d$. Then $\Hbul(E,\C)$ is the cohomology of the complex $\Lambda^\bullet(E^*)$ with respect to the differential $d$. The space $\Lambda^\bullet(E^*)$ is also naturally an $E$-module, with $E$-action induced by the coadjoint action of $E$ on $E^*$. Let $\Lambda^\bullet(E^*)^E$ denote the space of $E$-invariants in $\Lambda^\bullet(E^*)$. Then the inclusion $\Lambda^\bullet(E^*)^E \hookrightarrow \Lambda^\bullet(E^*)$ induces an algebra isomorphism $\Lambda^\bullet(E^*)^E \cong \Hbul(E,\C)$.

The addition map $E \times E \rightarrow E$, $(x,y) \mapsto x+y$, induces on $\Lambda^\bullet(E^*)$ the structure of a bialgebra, and the bialgebra structure restricts to one on $\Lambda^\bullet(E^*)^E \cong \Hbul(E,\C)$. Then $\Hbul(E,\C)$ is generated as an algebra by its subspace of primitive elements, which we denote by $P_E$. The subspace $P_E$ is concentrated in odd degrees, and the induced map $\Lambda(P_E) \rightarrow \Lambda^\bullet(E^*)^E \rightarrow \Hbul(E,\C)$ is an algebra isomorphism.

\begin{theorem} \label{theorem:exterioralgebra} \textup{\cite{Greub:1976,Samelson:1952}}
Let $\g$ be a finite-dimensional simple complex Lie algebra. Then $\Hbul(\Ug,\C)$ is an exterior algebra generated by homogeneous elements in the odd degrees listed in Table \ref{table:degrees}.
\end{theorem}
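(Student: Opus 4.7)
The discussion immediately preceding the theorem already realizes $\Hbul(\g,\C)$ as the exterior algebra $\Lambda(P_\g)$ on its space of primitive elements, with $P_\g$ concentrated in odd degrees. My task therefore reduces to computing the graded dimensions $\dim_\C P_\g^{2n-1}$: once these are in hand, the exterior algebra isomorphism will automatically supply homogeneous generators in the claimed odd degrees, and the theorem will follow after a type-by-type comparison with Table \ref{table:degrees}.

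To carry out the dimension count I would invoke two classical ingredients, both developed in \cite[Chapters V--VI]{Greub:1976}. The first is Chevalley's restriction theorem: with $\mathfrak{h} \subset \g$ a Cartan subalgebra and $W$ the Weyl group, the restriction map induces an isomorphism
\[
S(\g^*)^\g \stackrel{\sim}{\longrightarrow} S(\mathfrak{h}^*)^W,
\]
and by Chevalley--Shephard--Todd $S(\mathfrak{h}^*)^W$ is a polynomial algebra on $r = \dim\mathfrak{h}$ homogeneous generators of some degrees $m_1,\ldots,m_r$ (the fundamental invariant degrees of $W$), tabulated type-by-type in Bourbaki \cite[Plates I--IX]{Bourbaki:2002}. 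The second ingredient is the transgression theorem for the Weil/Cartan model of a reductive Lie algebra: it sends each polynomial invariant generator of degree $m_i$ to a primitive cohomology class in odd degree $2m_i - 1$, and these classes form a basis of $P_\g$. Combining the two, $\Hbul(\g,\C) \cong \Lambda(P_\g)$ is realized as an exterior algebra on $r$ homogeneous generators in degrees $2m_1-1,\ldots,2m_r-1$.

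To finish, I would compare these degrees type-by-type against the standard fundamental-degree lists---for instance $(m_1,\ldots,m_r) = (2,3,\ldots,r+1)$ in type $A_r$, giving odd degrees $(3,5,\ldots,2r+1)$, and $(2,4,\ldots,2r)$ in types $B_r$ and $C_r$, giving $(3,7,\ldots,4r-1)$, with analogous lists for $D_r$ and the exceptional types---and verify that these reproduce the entries of Table \ref{table:degrees} exactly. The one substantive step in this plan is the transgression calculation in the Cartan model, which is the mechanism that converts even-degree polynomial invariants on $\g$ into odd-degree primitive cohomology classes; everything else is either already in place from the discussion preceding the theorem or is standard Weyl-group bookkeeping. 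Since the whole result is classical, I would present it simply as a citation to \cite{Greub:1976,Samelson:1952} rather than reproduce the detailed arguments here.
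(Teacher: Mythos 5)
Your proposal is correct and matches the paper's treatment: the paper states this result as a citation to \cite{Greub:1976,Samelson:1952}, with the surrounding discussion (the identification $\Hbul(\g,\C)\cong\Lambda(P_\g)$ and, in Section \ref{subsection:restriction}, the transgression $\rho_E:S(E^*)^E\to\Lambda^\bullet(E^*)^E$ of degree $-1$ with image $P_E$) being precisely the Chevalley-restriction-plus-transgression mechanism you describe. Your type-by-type degree bookkeeping $(2m_i-1)$ also reproduces Table \ref{table:degrees} exactly, so nothing is missing.
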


\begin{table}[htbp]
\begin{tabular}{ll}
\hline
Type & Degrees \\
\hline
$A_r$ & $3,5,7,\dots,2r+1$ \\
$B_r$ & $3,7,11,\dots,4r-1$ \\
$C_r$ & $3,7,11,\dots,4r-1$ \\
$D_r$ ($r \geq 4$) & $3,7,11,\dots,4r-5,2r-1$ \\
$E_6$ & $3,9,11,15,17,23$ \\
$E_7$ & $3,11,15,19,23,27,35$ \\
$E_8$ & $3,15,23,27,35,39,47,59$ \\
$F_4$ & $3,11,15,23$ \\
$G_2$ & $3,11$ \\
\end{tabular}
\caption{Degrees of homogeneous generators for $\Hbul(\Ug,\C)$.} \label{table:degrees}
\end{table}

\subsection{Restriction maps} \label{subsection:restriction}

Let $E$ and $F$ be finite-dimensional reductive Lie algebras over $\C$ with $F \subseteq E$. Write $j: F \rightarrow E$ for the inclusion map. Let $W(E)$ and $W(F)$ be the Weyl groups associated to $E$ and $F$, respectively. The cohomological restriction map $\Hbul(E,\C) \rightarrow \Hbul(F,\C)$ is completely determined by the induced map $j^*:P_E \rightarrow P_F$ on the spaces of primitive elements.

Let $H \subset F$ be a Cartan subalgebra of $F$, and let $H' \subset E$ be a Cartan subalgebra of $E$ containing $H$. Let $S(E^*)$ be the ring of polynomial functions on $E$, but with the subspace $E^*$ concentrated in degree $2$. Similarly, define $S(F^*)$, $S(H^*)$, and $S(H'^*)$ to be the evenly graded rings of polynomial functions on $F$, $H$, and $H'$. The coadjoint action of $E$ on $E^*$ extends to an action of $E$ on $S(E^*)$, and similarly for $F$ on $S(F^*)$. Then the restriction map $S(E^*) \rightarrow S(F^*)$ induces a map $S(E^*)^E \rightarrow S(F^*)^F$. By \cite[\S 11.9]{Greub:1976}, the restriction maps $S(E^*) \rightarrow S(H'^*)$ and $S(F^*) \rightarrow S(H^*)$ induce isomorphisms $S(E^*)^E \cong S(H'^*)^{W(E)}$ and $S(F^*)^F \cong S(H^*)^{W(F)}$. Since $W(E)$ and $W(F)$ are finite reflection groups, the rings $S(H'^*)^{W(E)}$ and $S(H^*)^{W(F)}$ are generated by algebraically independent homogeneous elements.

By \cite[\S 6.7]{Greub:1976}, there exists a canonical linear map $\rho_E: S(E^*)^E \rightarrow \Lambda^\bullet(E^*)^E$, homogeneous of degree $-1$, and natural with respect to the inclusion $F \subseteq E$. By \cite[\S 6.14]{Greub:1976}, $\im \rho_E = P_E$, and $\ker \rho_E = (S(E^*)^E)^2$. Then
\[
j^*(P_E) \cong j^*(S(E^*)^E)/(S(F^*)^F)^2 \cong j^*(S(H'^*)^{W(E)})/(S(H^*)^{W(F)})^2,
\]
so to compute the map $j^*: P_E \rightarrow P_F$, and hence the map $j^*: \Hbul(E,\C) \rightarrow \Hbul(F,\C)$, it suffices to determine which polynomial generators for $S(H'^*)^{W(E)}$ restrict to a sum of decomposable elements in $S(H^*)^{W(F)}$.

In the following theorem we explicitly describe the cohomological restriction map $\Hbul(E,\C) \rightarrow \Hbul(F,\C)$ for certain simple pairs $(E,F)$. Specifically, let $E$ be a simple complex Lie algebra with associated root system $\Phi$, and let $\Delta = \set{\alpha_1,\ldots,\alpha_r}$ be a set of simple roots for $\Phi$, ordered as in \cite[Plates I-IX]{Bourbaki:2002}. Then we will assume that $F$ is a simple subalgebra of $E$ of rank $r-1$ corresponding to removing some simple root $\alpha_F$ from $\Delta$. For ease in stating the theorem, we identify the Lie algebras $E$ and $F$ with their respective Lie types.

\begin{theorem} \label{theorem:restrictionmap}
Let $E$, $F$, $\alpha_F$ be as in the previous paragraph. Write $\Hbul(E,\C) = \Lambda(x_{i_1},\ldots,x_{i_r})$ and $\Hbul(F,\C) = \Lambda(y_{j_1},\ldots,y_{j_{r-1}})$ as in Theorem \ref{theorem:exterioralgebra}, with the $x_i$ and $y_j$ homogeneous of degrees $i$ and $j$, respectively. If one of $E$ or $F$ is of type $D$, write $\wtx_i$ or $\wty_j$ for the generator of the last degree listed in Table \ref{table:degrees}. Then the homogeneous generators can be chosen so that the cohomological restriction map $\Hbul(E,\C) \rightarrow \Hbul(F,\C)$ admits the following description:
\begin{enumerate}
\item If $(E,F) = (A_r,A_{r-1})$ and $\alpha_F = \alpha_1$, then
\[
x_3 \mapsto y_3,\  x_5 \mapsto y_5,\  \ldots,\  x_{2r-1} \mapsto y_{2r-1},\  x_{2r+1} \mapsto 0.
\]

\item If $(E,F) = (B_r,B_{r-1})$ or $(C_r,C_{r-1})$ and $\alpha_F = \alpha_1$, then
\[
x_3 \mapsto y_3,\  x_7 \mapsto y_7,\  \ldots,\  x_{4r-5} \mapsto y_{4r-5},\  x_{4r-1} \mapsto 0.
\]

\item If $(E,F) = (D_r,D_{r-1})$, $r \geq 5$, and $\alpha_F = \alpha_1$, then
\[
x_3 \mapsto y_3,\  x_7 \mapsto y_7,\  \ldots,\  x_{4r-9} \mapsto y_{4r-9},\  x_{4r-5} \mapsto 0,\ \wtx_{2r-1} \mapsto 0.
\]

\item If $(E,F) = (D_r,A_{r-1})$ and $\alpha_F = \alpha_r$, then $\wtx_{2r-1} \mapsto y_{2r-1}$, and
\[
x_i \mapsto \begin{cases} 2y_i & \text{if $i=4j-1$ for some $j \geq 1$ with $2j \leq r$,} \\ 0 & \text{otherwise.} \end{cases}
\]

\item If $(E,F) = (E_6,D_5)$ and $\alpha_F = \alpha_6$, then
\[
x_3 \mapsto y_3,\ x_9 \mapsto \wty_9,\ x_{11} \mapsto y_{11},\ x_{15} \mapsto y_{15},\ x_{17} \mapsto 0,\ x_{23} \mapsto 0.
\]

\item If $(E,F) = (E_7,E_6)$ and $\alpha_F = \alpha_7$, then
\[
x_3 \mapsto 2y_3,\ x_{11} \mapsto 2y_{11},\ x_{15} \mapsto 2y_{15},\ x_{19} \mapsto 0,\ x_{23} \mapsto 2y_{23},\ x_{27} \mapsto 0,\ x_{35} \mapsto 0.
\]
\end{enumerate}
\end{theorem}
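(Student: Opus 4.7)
The strategy is the one laid out in the paragraph preceding the theorem: for each pair $(E,F)$ we fix explicit polynomial generators of $S(H'^*)^{W(E)}$ and $S(H^*)^{W(F)}$, compute the image of each generator under restriction from $H'^*$ to $H^*$, and identify which generators land in the ideal of decomposables $((S(H^*)^{W(F)})_+)^2$. Via the canonical isomorphism $P_E \cong S(H'^*)^{W(E)}/((S(H'^*)^{W(E)})_+)^2$ induced by $\rho_E$ (and likewise for $F$), this purely polynomial computation determines the map $j^*: P_E \to P_F$, hence the full restriction map on cohomology.

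For the classical pairs (1)--(3), the natural choice is power-sum generators: $p_k = \sum_{i=1}^{r+1} x_i^k$ for $A_r$ (restricted to the hyperplane $\sum x_i = 0$), and $p_{2k} = \sum_{i=1}^{r} x_i^{2k}$ for $B_r$, $C_r$, $D_r$, together with $e_r = x_1 \cdots x_r$ in type $D$. Removing the first simple root corresponds to setting $x_1 = 0$, and each $p_k$ restricts to the analogous power sum in the remaining variables. Newton's identities show that a power sum whose degree exceeds the number of remaining variables is expressible as a polynomial in the lower-degree power sums, hence is decomposable; this immediately produces the pattern in (1)--(3), with the nonzero entries having coefficient $1$.

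Case (4) uses the same power-sum strategy, but with $A_{r-1}$ embedded in $D_r$ via the trace-zero hyperplane $\sum_i x_i = 0$ in $\C^r$ rather than via a coordinate subspace. The restricted polynomial $p_{2k}|_H$ is an $A_{r-1}$-invariant of degree $2k$, which is a primitive generator when $2k \leq r$ and decomposable (by Newton's identities, now with $q_1 = 0$) when $2k > r$. The top invariant $e_r = x_1 \cdots x_r$ restricts to the top elementary symmetric polynomial in the trace-zero coordinates; expanding it in power-sum form via Newton's identities shows its class modulo decomposables is a nonzero scalar multiple of $q_r$, so $\wtx_{2r-1} \mapsto \wty_{2r-1}$ up to a scalar that can be absorbed into the choice of generator. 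The common factor $2$ in $x_{4j-1} \mapsto 2 y_{4j-1}$ arises from the relative normalization of the primitives attached to the power sums versus the one attached to $e_r$; it is pinned down by a direct calculation in a low-rank case such as $D_4 \subset A_3$.

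For the exceptional pairs (5) and (6), I would rely on explicit fundamental invariants of $W(E_6)$ and $W(E_7)$ from the literature. The zero entries are forced by degree counting: $W(D_5)$ has no invariant generator in degrees $9$ or $12$, so $x_{17}, x_{23} \mapsto 0$ in (5); $W(E_6)$ has no invariant generator in degrees $10$, $14$, or $18$, so $x_{19}, x_{27}, x_{35} \mapsto 0$ in (6). The main obstacle of the proof lies in the remaining entries: one must verify that the restriction of each remaining $W(E)$-generator is genuinely non-decomposable in $S(H^*)^{W(F)}$, and in case (6) compute the explicit common factor $2$. There is no uniform argument here---each exceptional case must be handled by direct polynomial calculation with the chosen fundamental invariants.
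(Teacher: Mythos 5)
Your proposal is correct and follows precisely the reduction the paper sets up in the paragraph before the theorem (pass to $S(H'^*)^{W(E)} \to S(H^*)^{W(F)}$ via the natural transgression $\rho$ and test decomposability of the restricted fundamental invariants), and your power-sum computations for cases (1)--(4) are exactly what the paper's cited reference \cite[Ch.~XI.4]{Greub:1976} carries out. The paper's own proof is essentially a pointer to the literature --- \cite{Toda:1974} for $(E_6,D_5)$ and \cite{Watanabe:1975} for $(E_7,E_6)$ supply the explicit fundamental invariants and the non-decomposability checks you rightly flag as the outstanding work in cases (5)--(6) --- and since the theorem only claims the generators \emph{can be chosen} to realize these formulas, the scalar factors of $2$ are normalization conventions inherited from those sources; the invariant content (which primitives restrict to zero, i.e.\ the rank of $j^*$ on $P_E$ in each degree) is all that your argument needs to pin down and all that the paper later uses.
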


\begin{proof}
The various cases of the theorem are established through direct computation of either $P_E \rightarrow P_F$, $S(E^*)^E \rightarrow S(F^*)^F$, or $S(H'^*)^{W(E)} \rightarrow S(H^*)^{W(F)}$. The case $(E_6,D_5)$ is computed in \cite[(5.6)]{Toda:1974}, the case $(E_7,E_6)$ is computed in \cite[(2.3)]{Watanabe:1975}, and the remaining cases (and many others) are computed in \cite[Chapter XI.4]{Greub:1976}.
\end{proof}

\section{Cohomology for the integral form \texorpdfstring{$\Ua$}{UA}}

Next we study the cohomological properties of a certain integral form $\Ua$ of $\Uq$, to be defined in Section \ref{subsection:integralform}, which will enable us to relate cohomology for $\Uq$ to that for the Lie algebra $\g$. First we collect some results on the algebra $\Ucz$.

\subsection{A resolution of the trivial module} \label{subsection:resolution}

We begin with the following lemma, which is well-known for the Lusztig integral form of $\Uq$, though we could find no analogous statement in the literature for the De Concini--Kac integral form $\Ucz$ as we have defined it here. We thus record the result now.

\begin{lemma} \label{lemma:free}
The algebra $\Ucz$ is a free $\cz$-module. Consequently, for any $\cz$-algebra $B$, the algebra $U_B = \Ucz \ocz B$ is free over $B$.
\end{lemma}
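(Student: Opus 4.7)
My plan is to establish the lemma by proving a Poincar\'{e}--Birkhoff--Witt (PBW) type theorem for $\Ucz$ over $\cz$. The consequence then follows immediately: if $\Ucz$ is free on some basis $\mathcal{B}$ over $\cz$, then for any $\cz$-algebra $B$ the set $\mathcal{B} \otimes 1$ is a $B$-basis of $U_B = \Ucz \ocz B$.

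First I would fix a reduced expression for the longest element $w_0$ of the Weyl group of $\g$; this determines an ordering $\beta_1,\ldots,\beta_N$ of the positive roots and, through the braid operators $T_i$ of \cite[\S 1.6]{De-Concini:1990}, root vectors $E_{\beta_k}, F_{\beta_k}$ in $\Uq$. The classical PBW theorem for $\Uq$ over $k = \Cq$ asserts that the ordered monomials
\[
F_{\beta_N}^{a_N}\cdots F_{\beta_1}^{a_1}\, K_1^{b_1}\cdots K_r^{b_r}\, E_{\beta_1}^{c_1}\cdots E_{\beta_N}^{c_N} \qquad (a_i,c_i \in \N,\ b_i \in \Z)
\]
form a $k$-basis of $\Uq$. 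As noted in the excerpt (the parenthetical at the end of Section \ref{subsection:notation}), the fact that the $T_i$ restrict to automorphisms of $\Ucz$ ensures each root vector $E_{\beta_k}, F_{\beta_k}$ lies in $\Ucz$, so every such monomial lies in $\Ucz$.

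Linear independence over $\cz$ is free: since $\Ucz \hookrightarrow \Uq$ (because $\Uq = \Ucz \ocz k$ and $\cz \hookrightarrow k$ is flat, as $\cz$ is a localization of a Laurent polynomial ring), any $\cz$-linear relation among the PBW monomials would give a $k$-linear relation, contradicting the PBW theorem over $k$. The main step, and the real obstacle, is showing that the PBW monomials \emph{span} $\Ucz$ over $\cz$. For this I would use the Levendorskii--Soibelman straightening formulas, which express a product $E_{\beta_i} E_{\beta_j}$ with $i < j$ (and the analogous products of $F$'s) as a $\Z[q,q^{-1}]$-linear combination of ordered monomials in the intermediate root vectors $E_{\beta_{i+1}},\ldots,E_{\beta_{j-1}}$. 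Combined with the quantum Serre relations and the relations between the $K_i^{\pm 1}$ and the root vectors, these let one rewrite any word in the generators as a $\Z[q,q^{-1}]$-linear combination of PBW monomials, except that reordering sometimes produces denominators coming from quantum integers $[n]_{d_i}$ with $1 \leq n \leq |a_{ij}|$. By the very definition of $S$ in \eqref{eq:badcyclotomics}, every such denominator is a unit in $\cz$, so the straightening can be carried out inside $\Ucz$. Induction on the length of the word (in the free monoid on the generators), ordered lexicographically after bringing negative generators to the left and positive generators to the right, then shows the PBW monomials span $\Ucz$ over $\cz$.

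Thus $\Ucz$ is free over $\cz$, and the consequence for arbitrary $\cz$-algebras $B$ follows by base change. The principal difficulty is purely bookkeeping: one must verify that the denominators occurring in the Levendorskii--Soibelman formulas are indeed controlled by the set $S$, which is the entire reason for choosing $\cz$ as the coefficient ring rather than $\Z[q,q^{-1}]$.
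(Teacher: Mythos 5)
Your linear-independence step is fine ($\Ucz$ is by definition a $\cz$-subalgebra of $\Uq$, so independence of the monomials over $k$ gives independence over $\cz$), but the spanning claim fails, and it fails for a reason different from the one you flag as ``the real obstacle.'' The monomials you propose have torus part $K_1^{b_1}\cdots K_r^{b_r}$, i.e.\ you are asserting that the degree-zero part of $\Ucz$ is the Laurent polynomial ring $\cz[K_1^{\pm1},\ldots,K_r^{\pm1}]$. It is not. The defining relation $E_iF_i - F_iE_i = (K_i - K_i^{-1})/(q_i-q_i^{-1}) =: [K_i;0]$ shows that $[K_i;0] \in \Ucz$, and this element is \emph{not} a $\cz$-linear combination of Laurent monomials in the $K_i$: the denominator $q_i - q_i^{-1} = q^{d_i}-q^{-d_i}$ is not a unit in $\cz$ (indeed for type $ADE$ one has $S=\set{1}$ and $\cz = \Z[q,q^{-1}]$, so no denominators at all are available). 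The set $S$ was chosen to absorb the quantum integers $[n]_{d_i}$ with $1 \leq n \leq \abs{a_{ij}}$ and $i \neq j$ arising in the Levendorskii--Soibelman relations among the root vectors --- that part of your straightening is legitimate and is exactly the point of the parenthetical remark at the end of Section \ref{subsection:notation} --- but $S$ does not contain the irreducible factors of $q^{d_i}-q^{-d_i}$, so the $E$--$F$ commutation step of your rewriting cannot be carried out inside your proposed $\cz$-span. Your monomials are linearly independent but span a proper $\cz$-submodule of $\Ucz$.

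The paper's proof works around this by using the triangular decomposition $\Ucz \cong \Ucz^+ \ocz \Ucz^0 \ocz \Ucz^-$ and treating the zero part separately: $\Ucz^0$ is a tensor product of subalgebras $A_i$ generated by $K_i^{\pm1}$ and $[K_i;0]$, and the identities $K_i^{-1}=K_i-(q_i-q_i^{-1})[K_i;0]$ and $K_i^2 = 1+(q_i-q_i^{-1})K_i[K_i;0]$ show that $A_i$ is spanned over $\cz$ by $\set{[K_i;0]^n,\ K_i[K_i;0]^m : n,m\in\N}$, which is then shown to be a basis by citing linear-independence results of De Concini--Kac and Lusztig. To repair your argument you would need to replace the torus part of your PBW monomials by products of these elements (and check the commutation of $[K_i;0]$ past the root vectors). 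The treatment of the positive and negative parts in your proposal is essentially the same as the paper's.
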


\begin{proof}[Proof sketch]
The algebra $\Ucz$ inherits from $\Uq$ the triangular decomposition $\Ucz \cong \Ucz^+ \ocz \Ucz^0 \ocz \Ucz^-$, where $\Ucz^+$ (resp.\ $\Ucz^-$) is the $\cz$-subalgebra of $\Ucz$ generated by the $E_i$ (resp.\ $F_i$) for $i \in [1,r]$. Since $\Ucz^+$ contains for each $\beta \in \Phi^+$ the root vector $E_\beta$, it follows that $\Ucz^+$ is spanned over $\cz$ by the collection of PBW-monomials $\prod_{\beta \in \Phi^+} E_\beta^{n_\beta}$, $n_\beta \in \N$, and that these monomials form a $\cz$-basis for $\Ucz^+$; cf.\ \cite[\S1.7]{De-Concini:1990}. By symmetry, $\Ucz^-$ is also free over $\cz$. For $i \in [1,r]$, let $A_i$ be the $\cz$-subalgebra of $\Ucz^0$ generated by $\{K_i^{\pm 1}, [K_i;0] \}$. Then $\Ucz^0 \cong A_1 \ocz \cdots \ocz A_r$, so to prove the first claim it suffices to show that each $A_i$ is $\cz$-free. Observe that $K_i^{-1} = K_i - (q_i-q_i^{-1})[K_i;0]$, where $q_i = q^{d_i}$, so $A_i$ is generated as a $\cz$-algebra by $K_i$ and $[K_i;0]$. The identity also shows that $K_i^2 = 1 + (q_i - q_i^{-1})K_i[K_i;0]$, so it follows that $A_i$ is spanned over $\cz$ by the collection of elements $\set{[K_i;0]^n, K_i [K_i;0]^m: n,m \in \N}$. Now one can apply \cite[(1.5.4)]{De-Concini:1990} and \cite[6.4(b2) and 6.7(i)(c)]{Lusztig:1990a} to deduce that this set is linearly independent over $\cz$, and hence forms a $\cz$-basis for $A_i$. Thus we conclude that $\Ucz$ is free over $\cz$. The second claim of the lemma is now immediate.
\end{proof}

\begin{lemma} \label{lemma:noetherian}
Let $B$ be a noetherian $\cz$-algebra. Then $U_B$ is noetherian.
\end{lemma}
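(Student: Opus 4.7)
The plan is to reduce the noetherianness of $U_B$ to that of an iterated skew polynomial algebra over $B$ via a PBW-type filtration, then invoke the skew Hilbert basis theorem.

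First, I would exploit the triangular decomposition and PBW basis recorded in the proof of Lemma \ref{lemma:free}. Fixing a convex ordering of $\Phi^+$ (as in De Concini--Kac), every element of $\Ucz$ can be written uniquely as a $\cz$-linear combination of ordered PBW monomials
\[
\prod_{\beta \in \Phi^+} F_\beta^{m_\beta} \cdot \prod_{i=1}^r K_i^{a_i}[K_i;0]^{b_i} \cdot \prod_{\beta \in \Phi^+} E_\beta^{n_\beta},
\]
and tensoring with $B$ these same monomials form a $B$-basis of $U_B$.

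Next I would put a De Concini--Kac style $\N$-filtration on $U_B$: assign to each root vector $E_\beta$, $F_\beta$ and each generator $K_i^{\pm 1}, [K_i;0]$ a suitable degree so that the commutator of any two basis generators, when rewritten in the PBW form, is a sum of monomials of strictly lower total degree than the product (this is exactly the content of the skew-commutation relations among root vectors in \cite{De-Concini:1990}). The ordered PBW monomials are then a filtered $B$-basis, and the associated graded $\operatorname{gr} U_B$ becomes an algebra on the images $\ol{E}_\beta, \ol{F}_\beta, \ol{K}_i^{\pm 1}, \ol{[K_i;0]}$ subject only to $q$-commutation relations of the form $\ol{x}\,\ol{y} = c\cdot \ol{y}\,\ol{x}$ with $c \in \cz^\times$. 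In other words, $\operatorname{gr} U_B$ is an iterated skew (Ore) polynomial extension of $B$ in the root-vector generators, localized in the torus-like generators $\ol{K}_i$.

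Then I would apply the standard skew/twisted Hilbert basis theorem: an iterated Ore extension $B[x;\sigma,\delta]$ of a noetherian ring $B$ by an automorphism-derivation pair remains noetherian, and localization at normal elements (here the $\ol{K}_i$) preserves noetherianness. Iterating over all PBW generators shows $\operatorname{gr} U_B$ is noetherian. Finally, I would invoke the well-known lifting principle: if a filtered ring has noetherian associated graded (with respect to an exhaustive, separated, positively indexed filtration), then the ring itself is noetherian. This yields $U_B$ noetherian, as claimed.

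The main obstacle is the bookkeeping in step two: one must choose the filtration carefully enough that every commutator $[E_\beta, E_\gamma]$ (and its $F$-analogue and mixed $E_\beta K_i$, $K_i [K_j;0]$ cases) produces only lower-order PBW terms, so that the associated graded is genuinely the skew-commutative algebra described above rather than something with lingering derivations of top degree. However, the requisite Levendorskii--Soibelman type straightening relations are already present in \cite{De-Concini:1990} (and all relevant denominators lie in $S$, hence in $\cz$), so no new calculations beyond those already referenced in the proof of Lemma \ref{lemma:free} should be needed.
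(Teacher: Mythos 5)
Your proof is correct and follows essentially the same route as the paper: degenerate $U_B$, via a filtration built from the Levendorskii--Soibelman commutation relations among the root vectors (which lie in $\Ucz$ by the choice of $S$), to an iterated twisted polynomial ring over the noetherian torus $U_B^0$, then apply the skew Hilbert basis theorem and lift noetherianness back through the filtration. The only cosmetic difference is that the paper, following De~Concini--Procesi and Brown--Goodearl, uses a sequence of $2N$ successive $\N$-filtrations (each degeneration being the associated graded of the previous one) rather than a single weighted $\N$-filtration, but both devices accomplish the same reduction.
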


\begin{proof}[Proof sketch]
The argument is due to Brown and Goodearl \cite[\S 2.2]{Brown:1997}. In \cite[\S 10.1]{De-Concini:1993}, De~Concini and Procesi define a sequence of degenerations
\begin{equation} \label{eq:degenerations}
\Uq = U^{(0)},U^{(1)},\ldots,U^{(2N)}
\end{equation}
of the algebra $\Uq$, each of which is the associated graded ring of the previous algebra with respect to a multiplicative $\N$-filtration. The definition of the degenerations relies on the commutation relations between the root vectors in $\Uq$. Since the root vectors in $\Uq$ are elements of $\Ucz$ by our choice for the denominator set $S$, one can define a similar sequence of degenerations
\[
U_B = U_B^{(0)},U_B^{(1)},\ldots,U_B^{(2N)}
\]
of the algebra $U_B$ such that $U_B^{(2N)}$ is an iterated twisted polynomial ring over the torus $U_B^0$. The torus $U_B^0$ is generated as a $B$-algebra by the finite set of commuting elements $\{K_i^{\pm 1},[K_i;0]: i \in [1,r] \}$, where $[K_i;0]:=E_iF_i-F_iE_i$ (cf.\ \cite[(1.5.4)]{De-Concini:1990}), so is noetherian because $B$ is noetherian. Then $U_B$ is noetherian by \cite[Theorems 1.2.9 and 1.6.9]{McConnell:1987}.
\end{proof}

\begin{corollary} \label{corollary:freeresolution}
Let $B$ be a $\cz$-algebra. There exists a resolution of the trivial $U_B$-module $B$ by finitely-generated free $U_B$-modules:
\[
\cdots \rightarrow P_n \rightarrow \cdots \rightarrow P_1 \rightarrow P_0 \rightarrow B \rightarrow 0.
\]
\end{corollary}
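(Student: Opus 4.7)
The plan is to first build the desired resolution when $B = \cz$ and then to extend scalars. The key observation is that $\cz = S^{-1}\Z[q,q^{-1}]$ is a Noetherian ring (being a localization of the Noetherian ring $\Z[q,q^{-1}]$), so Lemma \ref{lemma:noetherian} implies $U_\cz$ is Noetherian. The trivial module $\cz$ is cyclic over $U_\cz$ via the counit, so by the standard iterative construction—choose a surjection from a finitely-generated free $U_\cz$-module, note the kernel is finitely generated by Noetherianness, choose a surjection from a finitely-generated free $U_\cz$-module onto that kernel, and repeat—we obtain a resolution
\[
\cdots \to P_n \to \cdots \to P_1 \to P_0 \to \cz \to 0
\]
of $\cz$ by finitely-generated free $U_\cz$-modules.

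Next, I would apply the functor $-\ocz B$ to this resolution. Since each $P_n$ is a finitely-generated free $U_\cz$-module, say $P_n \cong U_\cz^{k_n}$, the module $P_n \ocz B \cong (\Ucz \ocz B)^{k_n} = U_B^{k_n}$ is a finitely-generated free $U_B$-module, and the right-hand end becomes $\cz \ocz B = B$. So it remains only to verify that exactness is preserved, i.e., that
\[
\cdots \to P_1 \ocz B \to P_0 \ocz B \to B \to 0
\]
is still exact.

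For this, I would use the flatness argument via $\Tor^\cz$. By Lemma \ref{lemma:free}, every finitely-generated free $U_\cz$-module is $\cz$-free, hence $\cz$-flat, so each $P_n$ is $\cz$-flat. Write the original resolution as a splicing of short exact sequences $0 \to K_n \to P_n \to K_{n-1} \to 0$ with $K_{-1} = \cz$. Since $\cz$ is free over itself, $\Tor_i^\cz(\cz,B) = 0$ for $i \geq 1$, and induction on $n$ using the long exact $\Tor^\cz$-sequence and the $\cz$-flatness of $P_n$ shows that $\Tor_i^\cz(K_n,B) = 0$ for all $i \geq 1$ and $n \geq -1$. Consequently each short exact sequence remains exact after $-\ocz B$, and splicing them back together yields the desired exact complex of finitely-generated free $U_B$-modules.

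The only subtlety, which is also the step I would expect to receive the closest scrutiny, is that the corollary is stated for an arbitrary $\cz$-algebra $B$ (not assumed Noetherian), so one cannot build the resolution directly over $U_B$ using Lemma \ref{lemma:noetherian}. The two-step argument above sidesteps this by exploiting that $\cz$ itself is Noetherian and that $\Ucz$ is $\cz$-free, which together allow base change to transport the resolution without destroying either freeness or exactness.
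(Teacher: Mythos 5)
Your proof is correct and follows essentially the same route as the paper: first construct the resolution over $\Ucz$ using the noetherianness supplied by Lemma \ref{lemma:noetherian}, then base change along $-\ocz B$ using the $\cz$-freeness of $\Ucz$ from Lemma \ref{lemma:free}. The only cosmetic difference is in how preservation of exactness is justified: the paper notes that the resolution is an exact complex of free $\cz$-modules and therefore splits over $\cz$, whereas you run the equivalent $\Tor^\cz$-vanishing induction on the spliced short exact sequences.
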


\begin{proof}
First consider the case $B = \cz$. Set $P_{-1} = \cz$, $P_0 = \Ucz$, and let $P_0 \rightarrow P_{-1}$ be the augmentation map. Now given $P_n$ with $n \geq 0$, let $I_n$ be the kernel of the map $P_n \rightarrow P_{n-1}$. Since by induction $P_n$ is a finitely-generated $\Ucz$-module, and since $\Ucz$ is noetherian by Lemma \ref{lemma:noetherian}, the $\Ucz$-submodule $I_n$ of $P_n$ is also finitely-generated as a $\Ucz$-module. Then there exists a finitely-generated free $\Ucz$-module $P_{n+1}$ mapping onto $I_n$. Take $P_{n+1} \rightarrow P_n$ to be the composite map $P_{n+1} \twoheadrightarrow I_n \hookrightarrow P_n$. We thus inductively construct the resolution $P_\bullet \rightarrow \cz$ of $\cz$ by finitely-generated free $\Ucz$-modules. Since $\Ucz$ is free over $\cz$ by Lemma \ref{lemma:free}, $P_\bullet \rightarrow \cz$ is a complex of free $\cz$-modules, hence splits over $\cz$. It then follows for any $\cz$-algebra $B$ that $P_\bullet \ocz B \rightarrow B$ is a resolution of $B$ by finitely-generated free $U_B$-modules.
\end{proof}

\subsection{Base change and the universal coefficient theorem} \label{subsection:basechange}

The crux of our argument for computing the cohomology ring $\Hbul(\Uq,k)$ relies on the universal coefficient theorem, which we now recall.

\begin{theorem}[Universal Coefficient Theorem for Homology] \textup{\cite[Theorem 7.55]{Rotman:2009}} \label{theorem:UCT}
Let $R$ be a ring, $A$ a left $R$-module, and $(K,d)$ a chain complex of flat right $R$-modules such that the subcomplex of boundaries also consists of flat $R$-modules. Then for each $n \in \Z$, there exists a short exact sequence
\begin{equation} \label{eq:UCT}
0 \rightarrow \opH_n(K) \otimes_R A \stackrel{\lambda_n}{\rightarrow} \opH_n(K \otimes_R A) \stackrel{\mu_n}{\rightarrow} \Tor_1^R(\opH_{n-1}(K),A) \rightarrow 0,
\end{equation}
natural with respect to both $K$ and $A$, such that $\lambda_n: \cls(z) \otimes a \mapsto \cls(z \otimes a)$.
\end{theorem}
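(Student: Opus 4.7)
The plan is to obtain the sequence from the standard short exact sequence of complexes $0 \to Z \to K \to K/Z \to 0$, where $Z_n = \ker d_n$, and then apply the homology long exact sequence. A preparatory observation is that each cycle module $Z_n$ is itself flat: the short exact sequence $0 \to Z_n \to K_n \to B_{n-1} \to 0$, combined with the flatness of $K_n$ and of $B_{n-1}$ (the latter by hypothesis), forces $\Tor_k^R(Z_n, -) = 0$ for all $k \geq 1$ via the long exact sequence of $\Tor$.

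Both the subcomplex $Z$ of cycles and the quotient complex $K/Z$ carry zero differential, since cycles are closed and since the differential $d_n$ on $K$ factors through $B_{n-1} \subset Z_{n-1}$. One has $(K/Z)_n \cong B_{n-1}$, which is flat by hypothesis, so the sequence
\[
0 \to Z \otimes_R A \to K \otimes_R A \to (K/Z) \otimes_R A \to 0
\]
of complexes remains exact after tensoring with $A$. Applying the long exact sequence in homology, and using that $Z \otimes_R A$ and $(K/Z) \otimes_R A$ still have zero differential, gives $\opH_n(Z \otimes_R A) = Z_n \otimes_R A$ and $\opH_n((K/Z) \otimes_R A) = B_{n-1} \otimes_R A$. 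A snake-lemma computation identifies the connecting homomorphism $B_n \otimes_R A \to Z_n \otimes_R A$ as $\iota_n \otimes \id_A$, where $\iota_n \colon B_n \hookrightarrow Z_n$ denotes the inclusion.

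Extracting a short exact sequence from the resulting five-term exact sequence yields $0 \to \operatorname{coker}(\iota_n \otimes \id_A) \to \opH_n(K \otimes_R A) \to \ker(\iota_{n-1} \otimes \id_A) \to 0$. To identify the outer terms, observe that the flatness of both $B_n$ and $Z_n$ makes
\[
0 \to B_n \xrightarrow{\iota_n} Z_n \to \opH_n(K) \to 0
\]
a length-one flat resolution of $\opH_n(K)$. Computing $\Tor_*^R(\opH_n(K), A)$ from this resolution identifies $\operatorname{coker}(\iota_n \otimes \id_A)$ with $\opH_n(K) \otimes_R A$ and $\ker(\iota_{n-1} \otimes \id_A)$ with $\Tor_1^R(\opH_{n-1}(K), A)$. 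The formula $\lambda_n(\cls(z) \otimes a) = \cls(z \otimes a)$ then falls out of tracing the inclusion $Z_n \hookrightarrow K_n$ through the construction, and naturality in both $K$ and $A$ is inherited from the naturality of each step.

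The computation is routine throughout, and I do not anticipate a serious obstacle. The most delicate bookkeeping is verifying that the connecting homomorphism is exactly $\iota_n \otimes \id_A$ (and not a sign-twisted or shifted variant), since this is what allows the outer terms of the short exact sequence to be identified cleanly as $\opH_n(K) \otimes_R A$ and $\Tor_1^R(\opH_{n-1}(K), A)$.
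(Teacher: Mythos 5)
The paper does not prove this statement itself; it cites it directly from Rotman \cite[Theorem 7.55]{Rotman:2009}, and your argument is precisely the standard proof given there: split off the cycle subcomplex, use flatness of $K_n$ and $B_{n-1}$ to deduce flatness of $Z_n$, tensor the short exact sequence of complexes with $A$, identify the connecting map as $\iota \otimes \id_A$, and read off the outer terms from the length-one flat resolution $0 \to B_n \to Z_n \to \opH_n(K) \to 0$ (using that $\Tor$ may be computed from flat resolutions). The proof is correct and matches the cited source's approach.
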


We apply the universal coefficient theorem as follows:

\begin{lemma} \label{lemma:ses}
Let $B$ be a $\cz$-algebra, and $\Gamma$ a $B$-algebra. Suppose $B$ is a principal ideal domain. Then for each $n \in \N$, there exists a short exact sequence
\begin{equation}  \label{eq:ses}
0 \rightarrow \opH^n(U_B,B) \otimes_B \Gamma \stackrel{\lambda_n}{\rightarrow} \opH^n(U_\Gamma,\Gamma) \stackrel{\mu_n}{\rightarrow} \Tor_1^B(\opH^{n+1}(U_B,B),\Gamma) \rightarrow 0,
\end{equation}
and the induced map $\lambda: \Hbul(U_B,B) \otimes_B \Gamma \rightarrow \Hbul(U_\Gamma,\Gamma)$ is an algebra homomorphism.
\end{lemma}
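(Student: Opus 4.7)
The plan is to apply Theorem~\ref{theorem:UCT} to the cochain complex computing $\opH^\bullet(U_B,B)$, and then to identify its base-changed version with one computing $\opH^\bullet(U_\Gamma,\Gamma)$. First, invoke Corollary~\ref{corollary:freeresolution} to obtain a resolution $\Pbul \to B$ of the trivial $U_B$-module $B$ by finitely-generated free $U_B$-modules, and set $K^n := \Hom_{U_B}(P_n, B)$. Because each $P_n$ is finitely generated and free over $U_B$, each $K^n$ is a finitely generated free $B$-module, and by construction $\opH^n(K^\bullet) \cong \opH^n(U_B,B)$.

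Next, the finite generation of $P_n$ together with tensor-hom adjunction yields a natural isomorphism of cochain complexes
\[
K^\bullet \otimes_B \Gamma \;\cong\; \Hom_{U_\Gamma}(\Pbul \otimes_B \Gamma,\,\Gamma).
\]
By Corollary~\ref{corollary:freeresolution} applied now with the $\cz$-algebra $\Gamma$, the base change $\Pbul \otimes_B \Gamma \to \Gamma$ is a resolution of $\Gamma$ by finitely-generated free $U_\Gamma$-modules, so the cohomology of the right-hand complex is $\opH^\bullet(U_\Gamma,\Gamma)$. To bring Theorem~\ref{theorem:UCT} to bear, I reindex $K^\bullet$ as a chain complex $L_n := K^{-n}$ of $B$-modules. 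Since $B$ is a PID and each $L_n$ is free of finite rank over $B$, every submodule of $L_n$ is free, hence flat; in particular the subcomplex of boundaries of $L_\bullet$ is flat over $B$. The hypotheses of Theorem~\ref{theorem:UCT} are therefore met with $R=B$, $A=\Gamma$, and $K = L_\bullet$, and the short exact sequence it produces, translated back to cohomological indexing via $n \mapsto -n$, is exactly \eqref{eq:ses}. The explicit formula $\lambda_n\colon \cls(z)\otimes a \mapsto \cls(z\otimes a)$ from Theorem~\ref{theorem:UCT} identifies $\lambda_n$ with the base-change map on cocycles.

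For the algebra claim, I would realize the cup product on $\opH^\bullet(U_B,B)$ at the chain level through a diagonal approximation $\Delta\colon \Pbul \to \Pbul \otimes_B \Pbul$ lifting the canonical isomorphism $B \to B \otimes_B B$, where $\Pbul \otimes_B \Pbul$ is viewed as a resolution of $B$ over $U_B$ via the Hopf coproduct $U_B \to U_B \otimes_B U_B$; the existence of $\Delta$ is the standard comparison theorem for projective resolutions. Base-changing $\Delta$ along $B \to \Gamma$ produces a diagonal approximation for the resolution $\Pbul \otimes_B \Gamma$ of $\Gamma$ over $U_\Gamma$, so the cup products on either side of $\lambda$ are computed by compatible diagonals, and $\lambda$ intertwines them. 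The main obstacle I anticipate is bookkeeping rather than anything genuinely hard: one must keep careful track of the reindexing, the several hom-tensor identifications, and the module structures transferred along the coproducts of $\Ucz$, $U_B$, and $U_\Gamma$ to confirm that the diagrams really do commute.
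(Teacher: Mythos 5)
Your proposal is correct and follows essentially the same route as the paper: a finitely-generated free resolution from Corollary~\ref{corollary:freeresolution}, the identification $\Hom_{U_B}(P_n,B)\otimes_B\Gamma \cong \Hom_{U_\Gamma}(P_n\otimes_B\Gamma,\Gamma)$, flatness of the boundaries via the PID hypothesis so that Theorem~\ref{theorem:UCT} applies, and a base-changed diagonal approximation to see that $\lambda$ respects cup products. The only cosmetic difference is your explicit reindexing of the cochain complex as a chain complex, which the paper handles by defining $K_n = \Hom_{U_B}(P_{-n},B)$ from the start.
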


\begin{proof}
Let $P_\bullet \rightarrow B$ be a resolution of $B$ by finitely-generated free $U_B$-modules as in Corollary \ref{corollary:freeresolution}, and set $K_n = \Hom_{U_B}(P_{-n},B)$. Then the chain complex $K_\bullet$ consists of finitely-generated free $B$-modules. Since every submodule of a free module over a PID is again free, the subcomplex of boundaries in $K$ is also free, hence flat, over $B$. Also, since $P_n$ is free over $U_B$, there exists for each $n \in \N$ a natural isomorphism
\begin{equation} \label{eq:extendtogamma}
\Hom_{U_B}(P_n,B) \otimes_B \Gamma \cong \Hom_{U_\Gamma}(P_n \otimes_B \Gamma,\Gamma).
\end{equation}
Then applying the universal coefficient theorem with $R = B$, $A = \Gamma$, and $K$ as above, one obtains the short exact sequence \eqref{eq:ses}.

Now let $\alpha \in \opH^a(U_B,B)$ and $\beta \in \opH^b(U_B,B)$ be represented by cocycles $f_\alpha \in K_{-a}$ and $f_\beta \in K_{-b}$, respectively, and let $\Delta: P \rightarrow P \otimes_B P$ be a $U_B$-module chain map lifting the isomorphism $B \cong B \otimes_B B$. Then the product $\alpha \beta$ is represented by the cocycle $(f_\alpha \otimes_B f_\beta) \circ \Delta \in K_{-(a+b)}$. Observe that $\Delta \otimes \id _\Gamma: P \otimes_B \Gamma \rightarrow (P \otimes_B P) \otimes_B \Gamma \cong (P \otimes_B \Gamma) \otimes_\Gamma (P \otimes_B \Gamma)$ is a chain map lifting the isomorphism $\Gamma \cong \Gamma \otimes_\Gamma \Gamma$. Then making the identification \eqref{eq:extendtogamma}, one sees for all $\gamma_\alpha,\gamma_\beta \in \Gamma$ that $\lambda(\alpha\beta \otimes_B \gamma_\alpha \gamma_\beta)$ and the product $\lambda_a(\alpha \otimes \gamma_\alpha)\lambda_b(\beta \otimes \gamma_\beta)$ are both represented by the cocycle
\[
[(f_\alpha \otimes_B f_\beta) \circ \Delta] \otimes_B \gamma_\alpha \gamma_\beta \in \Hom_{U_B}(P_{a+b},B) \otimes_B \Gamma,
\]
and hence that $\lambda$ is an algebra homomorphism.
\end{proof}

In Lemma \ref{lemma:ses} we assumed that $B$ was a principal ideal domain to conclude that the subcomplex of boundaries in $K$ was flat. This conclusion would also hold under the weaker assumption that $B$ is right semihereditary, or perhaps under even weaker assumptions on $B$, but we will not require such a generalization in this paper.

We now collect some results useful for analyzing the $\Tor$-group in \eqref{eq:ses}.

\begin{lemma} \label{lemma:finitelygenerated}
Let $B$ be a noetherian $\cz$-algebra. Then for each $n \in \N$, the cohomology group $\opH^n(U_B,B)$ is a finitely-generated $B$-module.
\end{lemma}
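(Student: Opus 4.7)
The plan is to apply the finite free resolution provided by Corollary \ref{corollary:freeresolution} together with the noetherian hypothesis on $B$. First I would fix a resolution $\Pbul \to B$ of the trivial $U_B$-module $B$ by finitely-generated free $U_B$-modules, which exists by Corollary \ref{corollary:freeresolution}. Then $\opH^n(U_B,B) = \Ext_{U_B}^n(B,B)$ is by definition the $n$-th cohomology of the cochain complex $K^\bullet := \Hom_{U_B}(\Pbul,B)$.

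The key observation is that for each $n \in \N$, since $P_n \cong U_B^{\oplus k_n}$ for some finite $k_n$, we get $K^n = \Hom_{U_B}(P_n,B) \cong B^{\oplus k_n}$ as a $B$-module. Thus each term of $K^\bullet$ is a finitely-generated $B$-module. Since $B$ is noetherian, submodules and quotients of finitely-generated $B$-modules remain finitely-generated, so the cocycles $\ker(d^n) \subseteq K^n$ and the coboundaries $\im(d^{n-1}) \subseteq K^n$ are both finitely-generated, and hence so is their quotient $\opH^n(K^\bullet) = \opH^n(U_B,B)$.

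This argument is essentially a one-liner once the resolution of Corollary \ref{corollary:freeresolution} is in hand; there is no real obstacle, as the noetherian hypothesis on $B$ is exactly what is needed to pass from the finitely-generated cochain groups to finitely-generated cohomology. The only point worth double-checking is that applying $\Hom_{U_B}(-,B)$ to a finitely-generated free $U_B$-module $U_B^{\oplus k_n}$ does indeed yield the finitely-generated free $B$-module $B^{\oplus k_n}$, which follows from the standard adjunction $\Hom_{U_B}(U_B,B) \cong B$ and the fact that $\Hom$ out of a finite direct sum is a finite direct product (equivalently a direct sum).
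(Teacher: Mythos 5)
Your argument is correct and is essentially the same as the paper's: both pass to the complex $\Hom_{U_B}(\Pbul,B)$ of finitely-generated free $B$-modules arising from the resolution of Corollary \ref{corollary:freeresolution} and then invoke the noetherian hypothesis to conclude that subquotients remain finitely generated. No gaps.
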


\begin{proof}
Let $K = \Hom_{U_B}(P_\bullet,B)$ be the complex of finitely-generated free $B$-modules considered in the proof of Lemma \ref{lemma:ses}. Since $B$ is noetherian, any subquotient of a finitely-generated $B$-module is again finitely-generated. In particular, $\opH^n(U_B,B)$ is a $B$-module subquotient of $K_{-n}$, so is finitely-generated over $B$.
\end{proof}

\begin{lemma} \label{lemma:tor}
Let $B$ be a commutative noetherian local ring with maximal ideal $\mathfrak{m}$, and let $M$ be a finitely-generated $B$-module. Then $M$ is a free $B$-module if and only if $\Tor_1^B(M,B/\mathfrak{m}) = 0$.
\end{lemma}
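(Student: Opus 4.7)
The plan is to prove the two directions separately, with the reverse direction being the substantive content.

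For the forward direction, if $M$ is free, then $M$ is a direct summand (in fact a direct sum of copies of $B$) in the category of $B$-modules, so $\Tor_i^B(M,N) = 0$ for all $i \geq 1$ and all $B$-modules $N$; in particular $\Tor_1^B(M,B/\mathfrak{m}) = 0$.

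For the reverse direction, the strategy is to build a minimal free presentation of $M$ and use the $\Tor$ vanishing to show its kernel dies. Set $k = B/\mathfrak{m}$. Because $M$ is finitely generated and $B$ is noetherian local, the quotient $M/\mathfrak{m}M$ is a finite-dimensional $k$-vector space, say of dimension $n$. Choose elements $m_1,\ldots,m_n \in M$ whose images form a $k$-basis of $M/\mathfrak{m}M$; by Nakayama's lemma these elements generate $M$, so they define a surjection $\phi: B^n \twoheadrightarrow M$. Let $K = \ker \phi$, so there is a short exact sequence
\[
0 \rightarrow K \rightarrow B^n \rightarrow M \rightarrow 0.
\]
Applying $-\otimes_B k$ yields the exact sequence
\[
\Tor_1^B(M,k) \rightarrow K \otimes_B k \rightarrow k^n \rightarrow M \otimes_B k \rightarrow 0.
\]
By the minimality of our chosen generators, the right-hand map $k^n \rightarrow M/\mathfrak{m}M$ is an isomorphism. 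Using the hypothesis $\Tor_1^B(M,k) = 0$, we conclude that $K \otimes_B k = K/\mathfrak{m}K = 0$.

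The remaining step is to deduce $K = 0$. Since $B$ is noetherian and $B^n$ is finitely generated, the submodule $K$ is also finitely generated. Nakayama's lemma applied to $K$ then forces $K = 0$, so $\phi$ is an isomorphism and $M \cong B^n$ is free. The only step one might call an obstacle is the finite generation of $K$, which is precisely why the noetherian hypothesis on $B$ is needed; everything else is a direct application of Nakayama and the long exact sequence of $\Tor$.
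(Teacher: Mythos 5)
Your proof is correct. The paper does not actually supply an argument here---it simply cites Bourbaki (\emph{Commutative Algebra}, II.3.2, Corollary 2 of Proposition 5)---so there is no competing approach to compare against; your minimal-free-presentation argument is exactly the standard proof underlying that citation. All the steps check out: the forward direction is the flatness of free modules; in the reverse direction, the long exact sequence of $\Tor$ applied to $0 \to K \to B^n \to M \to 0$ together with the vanishing of $\Tor_1^B(M,B/\mathfrak{m})$ and the minimality of the chosen generators gives $K/\mathfrak{m}K = 0$, and you correctly flag that the noetherian hypothesis is what guarantees $K$ is finitely generated so that Nakayama's lemma applies to kill $K$.
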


\begin{proof}
This follows from \cite[II.3.2 Corollary 2 of Proposition 5]{Bourbaki:1998}.
\end{proof}

In a similar vein, one has:

\begin{lemma} \label{lemma:notor}
Let $B$ be an integral domain, $b \in B$, and $M$ a $B$-module. Then
\[
\Tor_1^B(M,B/bB) \cong \set{m \in M: b.m = 0}.
\]
\end{lemma}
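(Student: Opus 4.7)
The plan is to exhibit a length-one free resolution of $B/bB$ over $B$ and then compute $\Tor_1^B(M, B/bB)$ directly from it. Since $B$ is an integral domain, multiplication by $b$ on $B$ is injective whenever $b \neq 0$ (the case $b = 0$ is degenerate: then $B/bB = B$ is free, so the Tor group vanishes while $\set{m \in M : 0 \cdot m = 0} = M$, and one therefore reads the lemma as implicitly assuming $b \neq 0$). Under this assumption, the sequence
\[
0 \to B \xrightarrow{\cdot b} B \to B/bB \to 0,
\]
with the first map multiplication by $b$ and the second the canonical projection, is a free resolution of $B/bB$ of length one.

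Applying $M \otimes_B (-)$ to the deleted resolution yields the two-term complex $M \xrightarrow{\cdot b} M$ (in degrees $1$ and $0$), whose homology computes $\Tor_\bullet^B(M, B/bB)$. Reading off $\Tor_1$ as the kernel of the surviving differential gives
\[
\Tor_1^B(M, B/bB) = \ker\bigl(M \xrightarrow{\cdot b} M\bigr) = \set{m \in M : b.m = 0},
\]
which is exactly the claimed identification (and simultaneously one recovers $\Tor_0^B(M, B/bB) = M/bM$, as expected). Equivalently, one can obtain the same conclusion from the long exact $\Tor$-sequence attached to the short exact sequence above, using that $\Tor_n^B(M,B) = 0$ for $n \geq 1$ since $B$ is free over itself.

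The only substantive input is the injectivity of multiplication by $b$ on $B$, which uses precisely that $B$ is an integral domain; beyond this the argument is a direct application of the definition of $\Tor$ via a free resolution, and I foresee no genuine obstacle.
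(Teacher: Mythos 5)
Your proof is correct and uses exactly the resolution $0 \to B \xrightarrow{\cdot b} B \to B/bB \to 0$ that the paper's own (one-line) proof invokes; the only addition is your sensible observation that the statement implicitly requires $b \neq 0$. No further comment is needed.
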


\begin{proof}
Compute the $\Tor$-group using the resolution $0 \rightarrow B \stackrel{\times b}{\rightarrow} B \rightarrow B/bB \rightarrow 0$.
\end{proof}

\subsection{The integral form \texorpdfstring{$\Ua$}{UA}} \label{subsection:integralform}

We now define the integral form $\Ua$, and describe how we will apply the results of Section \ref{subsection:basechange} to relate the cohomology theories for $\Uq$, $\Ua$, and $\Ug$. To begin, set $\A = \C[q]_{(q-1)}$, the localization of $\C[q]$ at the maximal ideal generated by $q-1$. Then $\A$ is a local principal ideal domain, with quotient field $k = \Cq$ and residue field $\C$. As in Section \ref{subsection:notation}, we write $\C_1$ for the field $\C$ considered as an $\A$-algebra via the map $q \mapsto 1$.

The field $k$ is $\A$-flat by \cite[Corollary 3.50]{Rotman:2009} because it is torsion-free, so applying Lemma \ref{lemma:ses} with $B = \A$ and $\Gamma = k$, we get for each $n \in \N$ the isomorphism
\begin{equation} \label{eq:extendscalarsiso}
\opH^n(\Ua,\A) \oa k \cong \opH^n(\Uq,k).
\end{equation}
On the other hand, $U_1 = \Ua \oa \C_1$, so applying Lemma \ref{lemma:ses} with $B = \A$ and $\Gamma = \C_1$, we get for each $n \in \N$ the short exact sequence
\begin{equation} \label{eq:sesA}
0 \rightarrow \opH^n(\Ua,\A) \oa \C_1 \stackrel{\lambda_n}{\rightarrow} \opH^n(U_1,\C) \rightarrow \Tor_1^\A(\opH^{n+1}(\Ua,\A),\C_1) \rightarrow 0.
\end{equation}
It follows from Lemmas \ref{lemma:finitelygenerated} and \ref{lemma:tor} that the map $\lambda_n$ is an isomorphism if and only if $\opH^{n+1}(\Ua,\A)$ is free as an $\A$-module. In particular, if the algebra homomorphism $\lambda: \Hbul(\Ua,\A) \oa \C_1 \rightarrow \Hbul(U_1,\C)$ is an isomorphism, then for each $n \in \N$, $\opH^n(\Ua,\A)$ must be $\A$-free of rank $\dim_\C \opH^n(U_1,\C) = \dim_\C \opH^n(\Ug,\C)$.

Our strategy for computing $\Hbul(\Uq,k)$ is now as follows. We first verify that the injective algebra homomorphism $\lambda: \Hbul(\Ua,\A) \oa \C_1 \rightarrow \Hbul(U_1,\C)$ is an isomorphism, and hence that $\Hbul(\Ua,\A)$ is $\A$-free of rank $\dim_\C \Hbul(\Ug,\C)$, by showing that the odd degree homogeneous generators for $\Hbul(U_1,\C) \cong \Hbul(\Ug,\C)$ all lie in the image of $\lambda$. We verify this for $\g$ not of type $D_r$ or $E_6$ in Section \ref{subsection:Uafree}, and for types $D_r$ and $E_6$ in Sections \ref{subsection:typeD} and \ref{subsection:typeE6}. Next, using the fact that $\Hbul(\Ua,\A)$ is $\A$-free and that $\Hbul(\Ua,\A) \oa \C_1 \cong \Hbul(\Ug,\C)$ is an exterior algebra, we deduce in Section \ref{subsection:ringstructure} that $\Hbul(\Ua,\A)$ is an exterior algebra generated in the same odd degrees as is $\Hbul(\Ug,\C)$. Finally, we apply \eqref{eq:extendscalarsiso} to deduce the structure of $\Hbul(\Uq,k)$.

\subsection{Cohomology for \texorpdfstring{$\Ua$}{UA}} \label{subsection:Uafree}

Following the strategy outlined in Section \ref{subsection:integralform}, we first verify that $\lambda$ is an isomorphism when $\g$ is not of type $D_r$ or $E_6$.

\begin{theorem} \label{theorem:free}
Suppose $\g$ is not of type $D_r$ or $E_6$. Then the injective algebra map
\[
\lambda: \Hbul(\Ua,\A) \oa \C_1 \rightarrow \Hbul(U_1,\C)
\]
is an isomorphism. In particular, $\Hbul(\Ua,\A)$ is a finitely-generated free $\A$-module.
\end{theorem}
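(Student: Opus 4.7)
The plan is to prove that $\lambda$ is surjective; combined with the injectivity of $\lambda$ that follows directly from the short exact sequence \eqref{eq:sesA}, this yields the claimed isomorphism, and the freeness assertion then follows from Lemmas \ref{lemma:finitelygenerated} and \ref{lemma:tor} applied degree-by-degree to \eqref{eq:sesA}. Because $\lambda$ is an algebra homomorphism and, by Lemma \ref{lemma:U1toUgiso} and Theorem \ref{theorem:exterioralgebra}, its codomain $\Hbul(U_1,\C) \cong \Hbul(\Ug,\C)$ is an exterior algebra on primitive elements in the odd degrees of Table \ref{table:degrees}, it suffices to produce a $\lambda$-preimage for each such primitive generator.

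My approach is induction on the rank $r$ of $\g$, organized around the naturality of the base-change construction of Section \ref{subsection:basechange} with respect to Hopf subalgebras. Given a simple subalgebra $F \subseteq \g$ of rank $r-1$ as in Theorem \ref{theorem:restrictionmap}, write $\Ua(F)$ and $U_1(F)$ for the analogous integral form and its specialization at $q=1$ attached to $F$, and let $\lambda_F$ denote the corresponding base-change map. Naturality produces a commutative square
\[
\xymatrix{
\Hbul(\Ua,\A) \oa \C_1 \ar[r]^-{\lambda} \ar[d] & \Hbul(U_1,\C) \ar[d] \\
\Hbul(\Ua(F),\A) \oa \C_1 \ar[r]^-{\lambda_F} & \Hbul(U_1(F),\C),
}
\]
in which the vertical arrows are the cohomological restriction maps of Theorem \ref{theorem:restrictionmap}. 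Assuming inductively that $\lambda_F$ is an isomorphism, cases (1), (2), and (6) of Theorem \ref{theorem:restrictionmap} applied to $\g$ of type $A_r$, $B_r$, $C_r$, and $E_7$ (restricting to subalgebras of type $A_{r-1}$, $B_{r-1}$, $C_{r-1}$, and $E_6$, respectively) show that every generator of $\Hbul(U_1,\C)$ other than the single top-degree one restricts, up to a nonzero scalar, onto a generator of $\Hbul(U_1(F),\C)$. A diagram chase then puts all but that one top generator in $\im \lambda$.

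The main obstacle is handling the top-degree primitive generator (namely $x_{2r+1}$ for $A_r$, $x_{4r-1}$ for $B_r$ and $C_r$, $x_{35}$ for $E_7$, and so on), which is annihilated by the restriction map used in the inductive step. For this, I would construct an explicit $\A$-integral cocycle representative using the free resolution of Corollary \ref{corollary:freeresolution} together with the PBW basis of $\Ua$ furnished by Lemma \ref{lemma:free}, and then verify via the Weyl-invariant polynomial description of Section \ref{subsection:restriction} that its reduction modulo $(q-1)$ represents the desired primitive class. The exceptional types $F_4$, $G_2$, and $E_8$, for which Theorem \ref{theorem:restrictionmap} provides no direct restriction, would be treated by running the same naturality argument with an alternate (possibly non-simple) rank-$(r-1)$ subalgebra whose restriction map can be computed by hand, or by producing the finitely many primitive classes directly in each case. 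The base case $r=1$, namely $\g = \mathfrak{sl}_2$ with $\Hbul(\Ug,\C) = \Lambda(x_3)$, reduces to exhibiting a single explicit integral cocycle in degree $3$.
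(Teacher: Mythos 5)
Your overall frame (injectivity from \eqref{eq:sesA}, surjectivity by hitting the primitive generators, a naturality square built on the restriction maps of Theorem \ref{theorem:restrictionmap}) matches the paper's, but there is a genuine gap exactly where you flag ``the main obstacle.'' The paper never produces an explicit integral cocycle for the top-degree generator, and it is not clear how you would: writing down a degree $2r+1$ (or $35$ for $E_7$, $59$ for $E_8$) cocycle on the resolution of Corollary \ref{corollary:freeresolution} and checking that its reduction modulo $q-1$ is the primitive class is not a tractable plan, and you give no method. The mechanism you are missing is the degree shift built into \eqref{eq:sesA}: $\lambda_n$ is surjective if and only if $\Tor_1^\A(\opH^{n+1}(\Ua,\A),\C_1)=0$, i.e.\ if and only if $\opH^{n+1}(\Ua,\A)$ is $\A$-free (Lemmas \ref{lemma:finitelygenerated} and \ref{lemma:tor}). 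Since no generator sits in degree $1$, the space $\opH^{n_j+1}(U_1,\C)$ is spanned by monomials in the strictly lower generators $z_1,\ldots,z_{j-1}$, which lie in $\im\lambda$ by induction on the degree; hence $\lambda_{n_j+1}$ is an isomorphism, and the naturality square applied in degree $n_j+1$ (not $n_j$), together with Nakayama's Lemma and the rank comparison via \cite[Lemma 1.21]{Andersen:1991}, forces $\opH^{n_j+1}(\Ua,\A)$ to be free. This disposes of the top generator with no cocycle construction at all. Your diagram chase in degree $n_j$ itself is also shakier than stated: lifting a preimage of the restricted generator through the left vertical map requires that map to be surjective, which is not available a priori; the paper only ever uses the square in a degree where $\lambda$ is already known to be an isomorphism on both rows, and then transfers surjectivity to the integral restriction map by Nakayama.

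Second, for $E_7$, $E_8$, $F_4$, $G_2$ (and the low-rank cases $A_1$, $A_2$, $B_2$, $C_2$) you propose alternate subalgebras or direct construction of the primitive classes; neither is needed, and the $E_7\rightarrow E_6$ restriction you invoke would be circular, since type $E_6$ is excluded from this theorem and is only treated later using the $E_7$ case as input. For these types one checks from Table \ref{table:degrees} that $\opH^{n+1}(\Ug,\C)=0$ for every generator degree $n$, so $\opH^{n+1}(\Ua,\A)=0$ by Nakayama and $\lambda_n$ is an isomorphism immediately; in particular the base case $\mathfrak{sl}_2$ requires no explicit degree-$3$ cocycle. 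As written, the proposal does not close.
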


\begin{proof}
We prove the theorem by showing that the odd-degree homogeneous generators for $\Hbul(U_1,\C)$ described in Theorem \ref{theorem:exterioralgebra} all lie in the image of $\lambda$. First suppose $\g$ is of type $A_1$, $A_2$, $B_2$, $C_2$, $E_7$, $E_8$, $F_4$, or $G_2$, and let $n$ be one of the odd degrees listed in Table \ref{table:degrees}. Using Theorem \ref{theorem:exterioralgebra} and Table \ref{table:degrees} one can check that $\opH^{n+1}(\Ug,\C) = 0$. Then \eqref{eq:sesA} implies that
\[
\opH^{n+1}(\Ua,\A)/(q-1)\opH^{n+1}(\Ua,\A) \cong \opH^{n+1}(\Ua,\A) \oa \C_1 = 0
\]
and hence $\opH^{n+1}(\Ua,\A) = 0$ by Nakayama's Lemma. Then $\lambda_n: \opH^n(\Ua,\A) \oa \C_1 \rightarrow \opH^n(U_1,\C)$ is an isomorphism by \eqref{eq:sesA}, so for these Lie types we conclude that the odd-degree homogeneous generators for $\Hbul(U_1,\C)$ all lie in the image of $\lambda$.

Now suppose that $\g$ is of type $X_r$, with $X \in \set{A,B,C}$ and $r \geq 3$. Let $\g' \subset \g$ be the subalgebra of $\g$ of type $X_{r-1}$ as defined in cases (1) and (2) of Theorem \ref{theorem:restrictionmap}. Define $\Uq(\g')$ and $\Ua(\g')$ to be the subalgebras of $\Uq$ and $\Ua$, respectively, generated by the set $\{E_i,F_i,K_i^{\pm 1}:i \in [2,r] \}$. Then $\Uq(\g')$ is isomorphic to the quantized enveloping algebra associated to $\g'$, and $\Ua(\g')$ is its corresponding integral form. By induction on the rank of $\g$, we may assume for each $n \in \N$ that the space $\opH^n(\Ua(\g'),\A)$ is $\A$-free of rank $\dim_\C \opH^n(U(\g'),\C)$. Let $n_1 < \cdots < n_r$ be the degrees listed in Table \ref{table:degrees} of the homogeneous generators for $\Hbul(\Ug,\C) \cong \Hbul(U_1,\C)$. As in Theorem \ref{theorem:restrictionmap}, write $\Hbul(\Ug,\C) \cong \Lambda(x_{n_1},\ldots,x_{n_r})$, with $x_{n_i}$ of degree $n_i$, and set $z_i = x_{n_i}$. Let $j \in [1,r]$, and assume by induction that $z_1,\ldots,z_{j-1} \in \im(\lambda)$. To show that $z_j \in \im(\lambda)$, it suffices to show that $\opH^{n_j+1}(\Ua,\A)$ is $\A$-free, since this implies by \eqref{eq:sesA} that $\lambda_{n_j}:\opH^{n_j}(\Ua,\A) \oa \C_1 \rightarrow \opH^{n_j}(U_1,\C)$ is an isomorphism.

By Theorem \ref{theorem:exterioralgebra}, the space $\opH^{n_j+1}(U_1,\C)$ is spanned by certain monomials in the generators $z_1,\ldots,z_r$, but since $n_i \neq 1$ for any $i$, no nonzero monomial can involve a generator $z_i$ with $i \geq j$. Then $\opH^{n_j+1}(U_1,\C)$ is spanned by certain monomials in the generators $z_1,\ldots,z_{j-1} \in \im(\lambda)$, and it follows that these monomials are in the image of $\lambda$, and hence that $\lambda_{n_j+1}$ is an isomorphism. Now consider the following diagram, where the vertical arrows are the corresponding restriction maps:
\begin{equation} \label{eq:diagram}
\xymatrix{
\opH^{n_j+1}(\Ua(\g),\A) \oa \C_1 \ar@{->}[r]^(.55){\lambda_{n_j+1}}_(.55){\sim} \ar@{->}[d] & \opH^{n_j+1}(\Ug,\C) \ar@{->}[d] \\
\opH^{n_j+1}(\Ua(\g'),\A) \oa \C_1 \ar@{->}[r]^(.55){\lambda_{n_j+1}}_(.55){\sim} & \opH^{n_j+1}(U(\g'),\C) 
}
\end{equation}
The commutativity of the diagram follows from the fact that the universal coefficient theorem (Theorem \ref{theorem:UCT}) is natural with respect to the complex $K$. The bottom map in the diagram is an isomorphism by induction on the rank of the Lie algebra. The right-hand restriction map is also an isomorphism, since by Theorem \ref{theorem:restrictionmap} the homogeneous generators $z_1,\ldots,z_{j-1}$ for $\Hbul(\Ug,\C)$ can be chosen so that the restriction map $\Hbul(\Ug,\C) \rightarrow \Hbul(U(\g'),\C)$ maps them onto the corresponding generators for $\Hbul(U(\g'),\C)$. This implies that the left-hand restriction map is an isomorphism as well, hence that the map $\opH^{n_j+1}(\Ua(\g),\A) \rightarrow \opH^{n_j+1}(\Ua(\g'),\A)/(q-1) \opH^{n_j+1}(\Ua(\g'),\A)$ is surjective. Then the restriction map $\opH^{n_j+1}(\Ua,\A) \rightarrow \opH^{n_j+1}(\Ua(\g'),\A)$ is surjective by Nakayama's Lemma. By induction on the rank of the Lie algebra, the space $\opH^{n_j+1}(\Ua(\g'),\A)$ is $\A$-free of rank $\dim_\C \opH^{n_j+1}(U(\g'),\C) = \dim_\C \opH^{n_j+1}(\Ug,\C)$. Then the map $\opH^{n_j+1}(\Ua,\A) \rightarrow \opH^{n_j+1}(\Ua(\g'),\A)$ is a split surjection of $\A$-modules, and $\opH^{n_j+1}(\Ua,\A)$ has $\A$-rank at least $\dim_\C \opH^{n_j+1}(\Ug,\C)$. Now
\begin{align*}
\dim_\C \opH^{n_j+1}(\Ug,\C) &\leq \dim_k \opH^{n_j+1}(\Ua,\A) \oa k & \text{by the bound on the $\A$-rank,} \\
&\leq \dim_\C \opH^{n_j+1}(\Ua,\A) \oa \C_1 \\
&= \dim_\C \opH^{n_j+1}(\Ug,\C),
\end{align*}
so we conclude that $\opH^{n_j+1}(\Ua,\A)$ is $\A$-free by \cite[Lemma 1.21]{Andersen:1991}.
\end{proof}

\section{Cohomology for the quantized enveloping algebra \texorpdfstring{$\Uq$}{Uq}}

\subsection{Cohomology ring structure} \label{subsection:ringstructure}

We now deduce the structure of $\Hbul(\Uq,k)$ in any case for which $\lambda: \Hbul(\Ua,\A) \oa \C_1 \rightarrow \Hbul(U_1,\C)$ is an isomorphism.

\begin{theorem} \label{theorem:Uqcohomology}
Suppose that $\lambda: \Hbul(\Ua,\A) \oa \C_1 \rightarrow \Hbul(U_1,\C)$ is an isomorphism. Then the cohomology rings $\Hbul(\Ua,\A)$ and $\Hbul(\Uqg,k)$ are exterior algebras generated by homogeneous elements in the odd degrees listed in Table \ref{table:degrees}.
\end{theorem}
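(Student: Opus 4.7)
The plan is to promote the exterior algebra structure on $\Hbul(U_1,\C)$ to $\Hbul(\Ua,\A)$ by lifting the homogeneous generators along the hypothesized algebra isomorphism $\lambda$, and then transfer the result to $\Hbul(\Uqg,k)$ via the base change isomorphism \eqref{eq:extendscalarsiso}. First I would observe that the hypothesis, combined with Lemmas \ref{lemma:finitelygenerated} and \ref{lemma:tor} and the short exact sequence \eqref{eq:sesA}, implies that each $\opH^n(\Ua,\A)$ is a finitely-generated free $\A$-module, with total $\A$-rank equal to $\dim_\C \Hbul(\Ug,\C) = 2^r$. In particular $\Hbul(\Ua,\A)$ is $\A$-torsion-free, which is the property that makes the lifting argument go through.

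Let $n_1 < \cdots < n_r$ be the odd degrees listed in Table \ref{table:degrees}, and let $z_1,\ldots,z_r$ denote the corresponding homogeneous generators of $\Hbul(U_1,\C) \cong \Hbul(\Ug,\C)$. Using surjectivity of $\lambda$, I would pick homogeneous lifts $\wtm_i \in \opH^{n_i}(\Ua,\A)$ with $\lambda(\wtm_i \otimes 1) = z_i$. Because the cup product on $\Hbul(\Ua,\A)$ is graded commutative (the Hopf structure on $\Uq$ restricts to one on $\Ua$) and each $n_i$ is odd, one obtains $2\wtm_i^2 = 0$ and $\wtm_i\wtm_j + \wtm_j\wtm_i = 0$ for $i \neq j$. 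Since $\A$ is a $\C$-algebra (so $2 \in \A^\times$) and $\Hbul(\Ua,\A)$ is $\A$-torsion-free, the first relation upgrades to $\wtm_i^2 = 0$, and the universal property of the exterior algebra produces a graded $\A$-algebra homomorphism
\[
\phi : \Lambda_\A(\wtm_1,\ldots,\wtm_r) \longrightarrow \Hbul(\Ua,\A).
\]

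To see that $\phi$ is an isomorphism, I would reduce modulo the maximal ideal $(q-1)$ of the local ring $\A$. By construction, $\phi \oa \C_1$ coincides, via $\lambda$ and Lemma \ref{lemma:U1toUgiso}, with the structural exterior-algebra isomorphism $\Lambda_\C(z_1,\ldots,z_r) \stackrel{\sim}{\to} \Hbul(U_1,\C)$ supplied by Theorem \ref{theorem:exterioralgebra}. The source and target of $\phi$ are both free $\A$-modules of total rank $2^r$, so a standard Nakayama argument (Nakayama surjectivity, then splitting off the kernel as a summand of a free module and applying Nakayama once more to see the kernel vanishes) upgrades the residue-field isomorphism to $\phi$ itself being an $\A$-algebra isomorphism. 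Thus $\Hbul(\Ua,\A)$ is an exterior $\A$-algebra on homogeneous generators of degrees $n_1,\ldots,n_r$.

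Finally, applying the algebra isomorphism \eqref{eq:extendscalarsiso} and the fact that extension of scalars preserves exterior algebras yields
\[
\Hbul(\Uqg,k) \;\cong\; \Hbul(\Ua,\A) \oa k \;\cong\; \Lambda_k(\wtm_1 \otimes 1,\ldots,\wtm_r \otimes 1),
\]
which gives the required structure over $k$. The main obstacle is conceptual rather than computational: one must verify that the exterior relations hold integrally, not merely modulo $(q-1)$, and this is precisely what the $\A$-freeness output by Lemmas \ref{lemma:finitelygenerated} and \ref{lemma:tor} (together with $2$ being a unit in $\A$) provides.
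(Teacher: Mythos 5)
Your proposal is correct and follows essentially the same route as the paper's own proof: lift the odd-degree generators along $\lambda$, use graded-commutativity of $\Hbul(\Ua,\A)$ (as a Hopf algebra over the commutative ring $\A$) to get the exterior relations, apply Nakayama's Lemma to see the resulting map $\Lambda(z_1,\ldots,z_r) \rightarrow \Hbul(\Ua,\A)$ is surjective hence an isomorphism of free $\A$-modules of equal finite rank, and then extend scalars via \eqref{eq:extendscalarsiso}. The only difference is cosmetic: you spell out the step $2\wtm_i^2 = 0 \Rightarrow \wtm_i^2 = 0$ using that $2 \in \A^\times$ and torsion-freeness, which the paper leaves implicit.
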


\begin{proof}
Since $\lambda$ is an isomorphism, we have for each $n \in \N$ that $\opH^n(\Ua,\A)$ is a free $\A$-module of rank $\dim_\C \opH^n(\Ug,\C)$ by the discussion in Section \ref{subsection:integralform}. Choose homogeneous elements $z_1,\ldots,z_r \in \Hbul(\Ua,\A)$ such that their images under $\lambda$ in $\Hbul(U_1,\C) \cong \Hbul(\Ug,\C)$ are the homogeneous generators described in Theorem \ref{theorem:exterioralgebra}. Since $\Ua$ is a Hopf algebra over the commutative ring $\A$, the cohomology ring $\Hbul(\Ua,\A)$ is graded-commutative \cite[Corollary VIII.4.3]{Mac-Lane:1995}. The elements $z_1,\ldots,z_r \in \Hbul(\Ua,\A)$ are each homogeneous of odd degree, so $z_i^2 = 0$ for each $i \in [1,r]$, and there exists a well-defined map $\varphi: \Lambda(z_1,\ldots,z_r) \rightarrow \Hbul(\Ua,\A)$ of graded $\A$-algebras. The induced map $\varphi \oa \C_1:\Lambda(z_1,\ldots,z_r) \oa \C_1 \rightarrow \Hbul(\Ua,\A) \oa \C_1$ is surjective by the choice of the $z_i$, so we conclude by Nakayama's Lemma that $\varphi$ is surjective, hence a graded algebra isomorphism because $\Lambda(z_1,\ldots,z_r)$ and $\Hbul(\Ua,\A)$ are each $\A$-free of the same finite rank. Extending scalars to $k$, we obtain via \eqref{eq:extendscalarsiso} the graded algebra isomorphism $\varphi \oa k: \Lambda(z_1,\ldots,z_r) \oa k \stackrel{\sim}{\rightarrow} \Hbul(\Uqg,k)$.
\end{proof}

\subsection{Type \texorpdfstring{$\mathbf{D}$}{D}} \label{subsection:typeD}

To extend Theorem \ref{theorem:free} to the case when $\g$ is of type $D_r$, we consider cohomological restrictions maps corresponding not only to a Lie subalgebra $\g'$ of $\g$ of type $D_{r-1}$, but also to a Lie subalgebra $\g''$ of $\g$ of type $A_{r-1}$. In the latter case, we also require the explicit understanding of the ring structure for $\Hbul(\Uq(\g''),k)$ that comes from Theorem \ref{theorem:Uqcohomology}.

\begin{theorem} \label{theorem:typeD}
The conclusion of Theorem \ref{theorem:free} holds if $\g$ is of type $D_r$.
\end{theorem}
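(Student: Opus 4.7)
The plan is to follow the inductive strategy of Theorem~\ref{theorem:free}, but now using two Lie subalgebras of $\g = D_r$ simultaneously in place of the single rank-$(r-1)$ subalgebra used there. Let $\g' \cong D_{r-1}$ be the subalgebra obtained by removing the simple root $\alpha_1$, and let $\g'' \cong A_{r-1}$ be the subalgebra obtained by removing $\alpha_r$. I would argue by induction on $r$; under the induction hypothesis, Theorem~\ref{theorem:Uqcohomology} applies to both $\g'$ and $\g''$, so $\Hbul(\Ua(\g'),\A)$ and $\Hbul(\Ua(\g''),\A)$ are exterior $\A$-algebras on odd-degree generators, and in particular are $\A$-free.

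Processing the homogeneous generators $z_1,z_2,\ldots$ of $\Hbul(U_1,\C)$ in order of increasing degree $n_1 < n_2 < \cdots$, to show $z_j \in \im(\lambda)$ it suffices --- as in Theorem~\ref{theorem:free} --- to show that $\opH^{n_j+1}(\Ua,\A)$ is $\A$-free, via \eqref{eq:sesA} and Lemma~\ref{lemma:tor}. Following the template of the diagram \eqref{eq:diagram} but with its target replaced by the direct sum over $\g'$ and $\g''$, this reduces to verifying in each relevant degree $n_j+1$ that (i) every monomial spanning $\opH^{n_j+1}(U_1,\C)$ is a product of previously processed generators, so that $\opH^{n_j+1}(U_1,\C) \subseteq \im(\lambda)$ and $\lambda_{n_j+1}$ is an isomorphism, and (ii) the combined cohomological restriction
\[
\opH^{n_j+1}(U_1,\C) \longrightarrow \opH^{n_j+1}(U(\g'),\C) \oplus \opH^{n_j+1}(U(\g''),\C)
\]
is injective. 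Granting (i) and (ii), the natural commutative square forces the induced map $\opH^{n_j+1}(\Ua,\A) \oa \C_1 \to \bigl(\opH^{n_j+1}(\Ua(\g'),\A) \oplus \opH^{n_j+1}(\Ua(\g''),\A)\bigr) \oa \C_1$ to be injective; since the target is $\A$-free by the induction hypothesis, any $\A$-torsion in $\opH^{n_j+1}(\Ua,\A)$ would be annihilated by restriction yet, by Nakayama's Lemma, would survive upon $\oa\C_1$ --- a contradiction. Hence $\opH^{n_j+1}(\Ua,\A)$ is torsion-free, and therefore $\A$-free.

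Conditions (i) and (ii) are verified by a case analysis on the spanning monomials of $\opH^{n_j+1}(U_1,\C)$, using Theorem~\ref{theorem:exterioralgebra} together with the explicit restriction formulas (3) and (4) of Theorem~\ref{theorem:restrictionmap}. The key inputs are that $\wtx_{2r-1} \mapsto y_{2r-1}$ under the $\g''$-restriction but vanishes under the $\g'$-restriction, while $x_{4r-5}$ vanishes under both. A short degree count forbids $x_{4r-5}$ from appearing as a factor in any monomial of $\opH^{n_j+1}(U_1,\C)$ for the critical degrees $n_j+1 \in \{2r,\,4r-4\}$ (the complementary factor would require degree $\le 1$), and similarly forbids $\wtx_{2r-1}$ in $\opH^{2r}(U_1,\C)$; together with the inductive placement of the other generators in $\im(\lambda)$, this gives (i). For (ii), partition the spanning monomials according to whether they contain $\wtx_{2r-1}$: those that do have all other factors of degree at most $2r-3$ (hence ``good'' for the $\g''$-restriction), and map to distinct nonzero $y$-monomials in $\Hbul(U(\g''),\C)$ while vanishing in $\Hbul(U(\g'),\C)$; those that do not map to distinct nonzero $y$-monomials in $\Hbul(U(\g'),\C)$ via case (3) of Theorem~\ref{theorem:restrictionmap}. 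The vanishing or nonvanishing of the $\g'$-component separates the two families, so the combined restriction is injective.

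The main obstacle is the combinatorial case analysis in (ii): one must parameterize the spanning monomials of $\opH^{n_j+1}(U_1,\C)$ for general $r$ and verify their distinguishability under the combined restriction. The role of the ring structure for $\Hbul(\Uq(\g''),k)$ furnished by Theorem~\ref{theorem:Uqcohomology} is essential, since it is precisely the $\A$-freeness of $\Hbul(\Ua(\g''),\A)$ that promotes the $\C$-level non-vanishing of $\g''$-restrictions into $\A$-module information strong enough to rule out torsion on the $\g$-side rather than being erased by the passage to the residue field.
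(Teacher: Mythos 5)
Your proposal is correct, and it takes a genuinely different route from the paper's. The paper splits the problem in two: for the exceptional generator $\wtx_{2r-1}$ it works over $k$ rather than over $\C_1$, choosing monomials $m_1,\ldots,m_t$ whose images span $\opH^{2r}(U_1,\C)$ and proving that the corresponding classes in $\opH^{2r}(\Uq,k)$ remain linearly independent by restricting to $\Hbul(\Uq(\g''),k)$ and invoking the exterior-algebra structure there (Theorem \ref{theorem:Uqcohomology} for type $A$); and for the generators $x_{n_i}$ with $n_i>2r-1$ it reverses direction entirely, embedding $D_r$ into the larger algebra $D_{2r-2}$ and pulling those generators back from a range of degrees already settled for the bigger algebra. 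You replace both devices by a single torsion-killing step at the residue field: injectivity of the combined restriction to $\g'\oplus\g''$ in the critical even degrees, together with $\A$-freeness of the targets, annihilates the torsion of $\opH^{n_j+1}(\Ua,\A)$ directly. I checked your combinatorial claim (ii), and it holds: no monomial in an even degree $\leq 4r-4$ can contain $x_{4r-5}$; every monomial containing $\wtx_{2r-1}$ has its remaining factors of degree $\leq 2r-3$, hence of the form $x_{4j-1}$ with $2j<r$, so it maps to a distinct nonzero monomial under the $A_{r-1}$-restriction while dying under the $D_{r-1}$-restriction; and the $D_{r-1}$-restriction injects the span of the remaining monomials. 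Two small remarks. First, your condition (i) is not actually needed: the top map $\lambda_{n_j+1}$ is injective for free by \eqref{eq:sesA}, and injectivity of the left vertical map only requires the top and right maps to be injective, so your induction on the degree collapses and only the induction on the rank (securing freeness for $\Hbul(\Ua(\g'),\A)$ with $\g'$ of type $D_{r-1}$) survives. Second, what you need from $\g''$ is only the $\A$-freeness of $\Hbul(\Ua(\g''),\A)$, which is Theorem \ref{theorem:free} for type $A$; the ring structure over $k$ supplied by Theorem \ref{theorem:Uqcohomology}, which is essential to the paper's $k$-level independence argument, plays no role in your version. The trade-off is that your route demands the explicit monomial bookkeeping of (ii) in every critical degree, whereas the paper's needs only the coarser facts that the $D_{r-1}$-restriction is an isomorphism in low degrees and that the high-degree generators are hit by restriction from $D_{2r-2}$.
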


\begin{proof}
Suppose $\g$ is of type $D_r$ with $r \geq 4$. The overall strategy is similar to that in the proof of Theorem \ref{theorem:free} for types $A$, $B$, and $C$, though some subtleties arise because the right-hand column of \eqref{eq:diagram} need not be an isomorphism when $\g$ is of type $D$. As in the proof of Theorem \ref{theorem:free}, we consider a subalgebra $\g' \subset \g$ of type $D_{r-1}$, as defined in case (3) of Theorem \ref{theorem:restrictionmap}, and also a subalgebra $\g'' \subset \g$ of type $A_{r-1}$, as defined in case (4) of Theorem \ref{theorem:restrictionmap}. (If $r = 4$, then $\g'$ is of type $A_3$, and cases (3) and (4) of Theorem \ref{theorem:restrictionmap} coincide.) For $j \in [1,r-1]$ set $n_j = 4j-1$, and set $n_r = 2r-1$, so that $n_1,\ldots,n_r$ are the degrees listed in Table \ref{table:degrees} for type $D_r$.

Our first step is to show for all $n \in [1,2r]$ that $\opH^n(\Ua,\A) \oa \C_1 \cong \opH^n(U_1,\C)$. Since $\Hbul(U_1,\C)$ is an exterior algebra generated in the odd degrees $n_1,\ldots,n_r$, this is equivalent to showing $\opH^{n_j}(\Ua,\A) \oa \C_1 \cong \opH^{n_j}(U_1,\C)$ whenever $n_j \leq 2r-1$. First let $j \in [1,r]$ with $n_j \leq 2r-3$. It follows from Theorem \ref{theorem:restrictionmap} that the restriction map $\opH^{n_j+1}(\Ug,\C) \rightarrow \opH^{n_j+1}(U(\g'),\C)$ is an isomorphism; cf.\ the analysis of \eqref{eq:diagram}. Also, by induction on the rank of $\g$, we may assume for all $n \in [1,2(r-1)]$ that $\opH^n(\Ua(\g'),\A) \oa \C_1 \cong \opH^n(U(\g'),\C)$, and hence that $\opH^n(\Ua(\g'),\A)$ is $\A$-free of rank $\dim_\C \opH^n(U(\g'),\C)$; cf.\ Section \ref{subsection:integralform}. Now one can imitate the proof of Theorem \ref{theorem:free}, arguing by induction on the rank and the degree, to show for all $n_j \leq 2r-3$ that $\opH^{n_j}(\Ua,\A) \oa \C_1 \cong \opH^{n_j}(U_1,\C)$. Then to complete the first step, we must now show that $\opH^{2r-1}(\Ua,\A) \oa \C_1 \cong \opH^{2r-1}(U_1,\C)$.

Given $y \in \Hbul(\Ua,\A)$, set $\ybar = \lambda(y \oa 1) \in \Hbul(U_1,\C)$. By the previous paragraph, we can choose $y_1,\ldots,y_s \in \Hbul(\Ua,\A)$ such that $\ybar_1,\ldots,\ybar_s \in \Hbul(U_1,\C)$ are representatives for the homogeneous generators for $\Hbul(U_1,\C)$ of degrees less than or equal to $2r-3$. Then $\opH^{2r}(U_1,\C)$ is spanned over $\C$ by certain monomials in the vectors $\ybar_1,\ldots,\ybar_s$. Let $m_1,\ldots,m_t \in \opH^{2r}(\Ua,\A)$ be monomials in the $y_i$ such that $\mbar_1,\ldots,\mbar_t$ form a basis for $\opH^{2r}(U_1,\C)$. We want to show that $\dim_k \opH^{2r}(\Uq,k) \geq t$, for this implies by \eqref{eq:extendscalarsiso} and \cite[Lemma 1.21]{Andersen:1991} that $\opH^{2r}(\Ua,A)$ is $\A$-free, and hence that $\opH^{2r-1}(\Ua,\A) \oa \C_1 \cong \opH^{2r-1}(U_1,\C)$ by \eqref{eq:ses}.

Let $\rho: \Hbul(\Uq,k) \rightarrow \Hbul(\Uq(\g''),k)$ be the restriction map. Given $y \in \Hbul(\Ua,\A)$, let $\wt{y}$ denote its image in $\Hbul(\Ua,\A) \oa k \cong \Hbul(\Uq,k)$. By Theorems \ref{theorem:free} and \ref{theorem:Uqcohomology}, $\Hbul(\Uq(\g''),k)$ is an exterior algebra generated by homogeneous elements of certain odd degrees. Moreover, it follows from Theorem \ref{theorem:restrictionmap} and the proof of Theorem \ref{theorem:Uqcohomology} that we can take certain of the generators for $\Hbul(\Uq(\g''),k)$ to be the vectors $\rho(\wty_1),\ldots,\rho(\wty_s)$. This implies that the vectors $\rho(\wtm_1),\ldots,\rho(\wtm_t) \in \opH^{2r}(\Uq(\g''),k)$ are linearly-independent, and hence $\wtm_1,\ldots,\wtm_t \in \opH^{2r}(\Uq,k)$ are as well. We then conclude that $\dim_k \opH^{2r}(\Uq,k) \geq t$, which completes the first step of the proof.

We have shown for all $a \in \N$ that if $\g$ is of type $D_a$, then $\opH^n(\Ua,\A) \oa \C_1 \cong \opH^n(U_1,\C)$ for $n \in [1,2a]$. Write $\Hbul(U_1,\C) \cong \Hbul(\Ug,\C) \cong \Lambda(x_3,\ldots,x_{4r-5},\wtx_{2r-1})$ as in Theorem \ref{theorem:restrictionmap}. Suppose $n_i = \deg(x_i) > 2r-1$; we must show that $x_i \in \im(\lambda)$. Set $m = 2r-2$, and let $\g_m$ be the finite-dimensional simple complex Lie algebra of type $D_m$. The inclusion of Dynkin diagrams $D_r \hookrightarrow D_m$ induces an inclusion of algebras $\Ua \hookrightarrow \Ua(\g_m)$; cf.\ Section \ref{subsection:Uafree}. We thus have the following commutative diagram, where the vertical arrows are the corresponding restriction maps:
\begin{equation} \label{eq:diagramtypeD}
\xymatrix{
\opH^{n_i}(\Ua(\g_m),\A) \oa \C_1 \ar@{->}[r]^(.57){\lambda_{n_i}} \ar@{->}[d] & \opH^{n_i}(U(\g_m),\C) \ar@{->}[d] \\
\opH^{n_i}(\Ua(\g),\A) \oa \C_1 \ar@{->}[r]^(.57){\lambda_{n_i}} & \opH^{n_i}(U(\g),\C) 
}
\end{equation}
Since $n_i \leq 4r-5 < 2m$, the top row of \eqref{eq:diagramtypeD} is an isomorphism by the first step of the proof. Also, since $n_i \leq 4r-5 \leq 4m-9$, it follows from Theorem \ref{theorem:restrictionmap} that $x_i$ is in the image of the restriction map $\Hbul(U(\g_m),\C) \rightarrow \Hbul(\Ug,\C)$. Then from the commutativity of \eqref{eq:diagramtypeD} we conclude that $x_i \in \im(\lambda)$. This completes the proof.
\end{proof}

\begin{corollary} \label{corollary:typeD}
The conclusion of Theorem \ref{theorem:Uqcohomology} holds if $\g$ is of type $D_r$.
\end{corollary}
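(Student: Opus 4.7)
The statement of Corollary \ref{corollary:typeD} says exactly that the conclusion of Theorem \ref{theorem:Uqcohomology} holds when $\g$ is of type $D_r$. Recall that Theorem \ref{theorem:Uqcohomology} is a conditional result: its hypothesis is that the injective algebra homomorphism $\lambda: \Hbul(\Ua,\A) \oa \C_1 \rightarrow \Hbul(U_1,\C)$ is an isomorphism, and from this hypothesis it derives the exterior algebra structure on $\Hbul(\Ua,\A)$ and on $\Hbul(\Uqg,k)$. So the plan is simply to verify that this hypothesis now holds for type $D_r$ and invoke Theorem \ref{theorem:Uqcohomology} as a black box.

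The verification of the hypothesis is precisely the content of Theorem \ref{theorem:typeD}, which extends the conclusion of Theorem \ref{theorem:free} (namely that $\lambda$ is an isomorphism) from types $A$, $B$, $C$, and the small exceptional cases to all simple Lie algebras of type $D_r$ with $r \geq 4$. There is no additional content to establish: the proof of Theorem \ref{theorem:Uqcohomology} makes no assumptions on the Lie type of $\g$ beyond the isomorphism of $\lambda$, so applying it with $\g$ of type $D_r$ immediately yields that $\Hbul(\Ua,\A)$ and $\Hbul(\Uqg,k)$ are exterior algebras generated in the odd degrees $3, 7, 11, \ldots, 4r-5, 2r-1$ listed in Table \ref{table:degrees}.

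Consequently the proposal is a one-line deduction: combine Theorem \ref{theorem:typeD} (which supplies the hypothesis on $\lambda$) with Theorem \ref{theorem:Uqcohomology} (which delivers the exterior algebra structure) to obtain the desired description of $\Hbul(\Ua,\A)$ and $\Hbul(\Uqg,k)$. There is no main obstacle to overcome in the corollary itself; all of the substantive work — the careful rank induction, the use of the restriction map to the type $A_{r-1}$ subalgebra, and the commutative square \eqref{eq:diagramtypeD} bootstrapping from the larger algebra $\Ua(\g_m)$ — was carried out in the proof of Theorem \ref{theorem:typeD}.
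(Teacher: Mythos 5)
Your proposal is correct and matches the paper's intent exactly: the corollary is stated without proof immediately after Theorem \ref{theorem:typeD} precisely because it is the one-line deduction you describe, combining Theorem \ref{theorem:typeD} (which verifies the hypothesis that $\lambda$ is an isomorphism in type $D_r$) with the conditional Theorem \ref{theorem:Uqcohomology}. Nothing further is needed.
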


\subsection{Type \texorpdfstring{$E_6$}{E6}} \label{subsection:typeE6}

To extend Theorem \ref{theorem:free} to the case when $\g$ is of type $E_6$, we consider restriction maps like those in cases (5) and (6) of Theorem \ref{theorem:restrictionmap}.

\begin{theorem} \label{theorem:typeE6}
Suppose $\g$ is of type $E_6$. Then $\Hbul(\Ua,\A) \oa \C_1 \cong \Hbul(U_1,\C)$.
\end{theorem}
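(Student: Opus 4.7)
The plan is to mimic the inductive structure of the proof of Theorem \ref{theorem:typeD}, working through the six homogeneous generators of $\Hbul(U_1,\C) \cong \Hbul(\Ug,\C)$ in order of increasing degree. Writing $(n_1,\ldots,n_6)=(3,9,11,15,17,23)$ for the $E_6$ degrees of Table \ref{table:degrees}, my induction hypothesis at stage $j$ is that lifts $y_1,\ldots,y_{j-1} \in \Hbul(\Ua,\A)$ of $x_{n_1},\ldots,x_{n_{j-1}}$ have already been produced. To obtain a lift $y_j$ of $x_{n_j}$ it suffices, as in Section \ref{subsection:integralform}, to show that $\opH^{n_j+1}(\Ua,\A)$ is $\A$-free, for then $\lambda_{n_j}$ becomes an isomorphism via \eqref{eq:sesA}.

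First I would dispose of the degrees where the target cohomology vanishes: since no subset of distinct elements of $\set{3,9,11,15,17,23}$ sums to $4$, $10$, or $16$, we have $\opH^{n_j+1}(\Ug,\C)=0$ for $j \in \set{1,2,4}$, so Nakayama's Lemma applied to \eqref{eq:sesA} forces $\opH^{n_j+1}(\Ua,\A)=0$ in these cases and produces lifts $y_1$, $y_2$, $y_4$ of $x_3$, $x_9$, $x_{15}$ at once. For the remaining indices $j \in \set{3,5,6}$ the space $\opH^{n_j+1}(\Ug,\C)$ is one-dimensional, spanned respectively by the decomposable monomials $x_3 x_9$, $x_3 x_{15}$, and $x_9 x_{15}$, each built from previously constructed lifts. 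I would form the corresponding cup product $m_j \in \opH^{n_j+1}(\Ua,\A)$ of the appropriate $y_i$.

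The key step, modeled on the second half of the proof of Theorem \ref{theorem:typeD}, is then to show that $\wtm_j \in \Hbul(\Uq,k)$ is non-zero, whence $\dim_k \opH^{n_j+1}(\Uq,k) \geq 1$; combining \eqref{eq:extendscalarsiso}, \eqref{eq:sesA}, and \cite[Lemma 1.21]{Andersen:1991} then forces $\opH^{n_j+1}(\Ua,\A)$ to be $\A$-free of rank one. I would establish the non-vanishing of $\wtm_j$ by restricting to the subalgebra $\Uq(\g')$ with $\g' \subset \g$ of type $D_5$ as in case (5) of Theorem \ref{theorem:restrictionmap}. That case sends $x_3$, $x_9$, $x_{15}$ to the three distinct exterior-algebra generators $y_3$, $\wty_9$, $y_{15}$ of $\Hbul(U(\g'),\C)$; by Theorem \ref{theorem:typeD}, Corollary \ref{corollary:typeD}, and the proof of Theorem \ref{theorem:Uqcohomology}, we may take certain of the exterior-algebra generators of $\Hbul(\Uq(\g'),k)$ to be the images $\rho(\wty_1)$, $\rho(\wty_2)$, $\rho(\wty_4)$ under the restriction $\rho\colon\Hbul(\Uq,k) \to \Hbul(\Uq(\g'),k)$. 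Consequently each $\rho(\wtm_j)$ is a non-zero monomial in distinct exterior-algebra generators, so $\wtm_j \neq 0$.

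The main obstacle, relative to the type $A$, $B$, $C$ argument, is that $E_6$ has no same-type subalgebra on which to induct, and the $D_5$ restriction annihilates both $x_{17}$ and $x_{23}$, so restriction cannot witness these generators directly on the classical side. What saves the argument is that only the decomposable monomials in degrees $12$, $18$, and $24$ must survive under restriction, and each does---crucially because $x_9 \in \Hbul(\Ug,\C)$ restricts to the type-$D$ spinor generator $\wty_9$ rather than to zero, which is exactly what is needed to track $x_3 x_9$ and $x_9 x_{15}$ down to $D_5$.
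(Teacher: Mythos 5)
Your proof is correct, and for two of the six generators it takes a genuinely different route from the paper. The paper handles degrees $3$, $9$, $15$ exactly as you do (vanishing of $\opH^{4}$, $\opH^{10}$, $\opH^{16}$), and handles degree $17$ exactly as you do (the monomial $y_3y_{15}$ surviving restriction to the type $D_5$ subalgebra of case (5) of Theorem \ref{theorem:restrictionmap}). But for degrees $11$ and $23$ the paper works \emph{top-down}: it embeds $E_6$ into $E_7$, invokes Theorem \ref{theorem:free} for $E_7$ together with case (6) of Theorem \ref{theorem:restrictionmap} (where $x_{11}\mapsto 2y_{11}$ and $x_{23}\mapsto 2y_{23}$), and reads off from the analogue of \eqref{eq:diagramtypeD} that the degree $11$ and $23$ generators lie in $\im(\lambda)$. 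You instead work \emph{bottom-up}, observing that $\opH^{12}(\Ug,\C)$ and $\opH^{24}(\Ug,\C)$ are one-dimensional, spanned by the decomposables $x_3x_9$ and $x_9x_{15}$, and detecting the corresponding cup products in $\opH^{12}(\Uq,k)$ and $\opH^{24}(\Uq,k)$ via the same $D_5$ restriction used for degree $17$; the chain $1\leq\dim_k\opH^{n_j+1}(\Uq,k)\leq\dim_\C\opH^{n_j+1}(\Ua,\A)\oa\C_1\leq\dim_\C\opH^{n_j+1}(U_1,\C)=1$ then forces freeness via \cite[Lemma 1.21]{Andersen:1991}. Your version is more uniform and avoids importing the $(E_7,E_6)$ restriction data of Watanabe, at the cost of the (easily checked, and correctly checked by you) combinatorial facts that $\opH^{12}$, $\opH^{18}$, $\opH^{24}$ are each one-dimensional and that the relevant factors $x_3$, $x_9$, $x_{15}$ all restrict to distinct exterior generators of $\Hbul(U(D_5),\C)$ --- your observation that $x_9\mapsto\wty_9$ (the spinor generator) is indeed the crucial point that makes all three monomials visible after restriction. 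The paper's detour through $E_7$ buys a shorter per-degree argument but depends on an additional piece of classical restriction data.
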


\begin{proof}[Proof sketch]
The strategy is similar to the proofs of Theorems \ref{theorem:free} and \ref{theorem:typeD}. The generators for $\Hbul(U_1,\C)$ are in degrees $3$, $9$, $11$, $15$, $17$, and $23$. One can check using Theorem \ref{theorem:exterioralgebra} that $\opH^n(U_1,\C) = 0$ for $n \in \set{4,10,16}$, so $\opH^n(\Ua,\A) \oa \C_1 \cong \opH^n(U_1,\C)$ if $n \in \set{3,9,15}$. The description of the restriction map from $E_7$ to $E_6$ in case (7) of Theorem \ref{theorem:restrictionmap} implies that the generators of degrees $11$ and $23$ are also in $\im(\lambda)$; cf.\ the analysis of \eqref{eq:diagramtypeD}. Then it remains to show that $\opH^{17}(\Ua,\A) \oa \C_1 \cong \opH^{17}(U_1,\C)$, or equivalently that $\opH^{18}(\Ua,\A)$ is $\A$-free. We can choose $y_{3} \in \opH^3(\Ua,\A)$ and $y_{15} \in \opH^{15}(\Ua,\A)$ such that the product $\ybar_3 \ybar_{15}$ spans $\opH^{18}(U_1,\C)$. Let $\g' \subset \g$ be the subalgebra of type $D_5$ as defined in case (5) of Theorem \ref{theorem:restrictionmap}, and let $\rho: \Hbul(\Ua,\A) \rightarrow \Hbul(\Ua(\g'),\A)$ be the corresponding restriction map. Then the argument in the third paragraph of the proof of Theorem \ref{theorem:free} shows that $\rho$ is surjective in degrees $3$ and $15$. This implies by the proof of Theorem \ref{theorem:Uqcohomology} that $\opH^{18}(\Ua(\g'),\A) \oa k \cong \opH^{18}(\Uq(\g'),k) \cong k$ is spanned by $\rho(y_3 y_{15})$. Then the product $y_3 y_{15} \in \opH^{18}(\Ua,\A)$ must span a one-dimensional subspace of $\opH^{18}(\Uq,k)$. Now
\[
1 \leq \dim_k \opH^{18}(\Ua,\A) \oa k \leq \dim_\C \opH^{18}(\Ua,\A) \oa \C_1 \leq \dim_\C \opH^{18}(U_1,\C) = 1,
\]
so $\opH^{18}(\Ua,\A)$ must be $\A$-free or rank $1$ by \cite[Lemma 1.21]{Andersen:1991}.
\end{proof}

Here is the main result of our computations:

\begin{theorem}
The cohomology ring $\Hbul(\Uq,k)$ is an exterior algebra over a graded subspace with odd gradation. Explicitly, $\Hbul(\Uq,k)$ is generated as an exterior algebra by homogeneous elements in the same odd degrees as for $\Hbul(\Ug,\C)$.
\end{theorem}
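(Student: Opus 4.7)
The plan is to observe that this statement is a direct corollary of the preceding theorems, since the work of establishing the ring structure has already been carried out in the case-by-case analysis. Specifically, Theorem \ref{theorem:Uqcohomology} reduces everything to the single hypothesis that the algebra homomorphism $\lambda : \Hbul(\Ua,\A) \oa \C_1 \to \Hbul(U_1,\C)$ is an isomorphism, so I would first recall this reduction and then verify the hypothesis in each Lie type.

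Next I would organize the verification by Dynkin type. For $\g$ of type $A_r$, $B_r$, $C_r$, $E_7$, $E_8$, $F_4$, or $G_2$, the fact that $\lambda$ is an isomorphism is precisely the content of Theorem \ref{theorem:free}. For $\g$ of type $D_r$ with $r \geq 4$, it is Theorem \ref{theorem:typeD}, and the conclusion about the ring structure is already recorded as Corollary \ref{corollary:typeD}. For $\g$ of type $E_6$, it is Theorem \ref{theorem:typeE6}. In each case, applying Theorem \ref{theorem:Uqcohomology} yields that $\Hbul(\Ua,\A)$ is an exterior algebra on homogeneous generators in the odd degrees listed in Table \ref{table:degrees}, and then the base-change isomorphism \eqref{eq:extendscalarsiso} gives the same description for $\Hbul(\Uq,k) \cong \Hbul(\Ua,\A) \oa k$.

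The substantive obstacle has already been addressed upstream: verifying that $\lambda$ is surjective onto each homogeneous generator of $\Hbul(U_1,\C)$. The cleanest cases are those in which $\opH^{n+1}(\Ug,\C)$ vanishes in each degree $n$ immediately above a generator degree, so Nakayama's lemma applied to \eqref{eq:sesA} forces $\opH^{n+1}(\Ua,\A) = 0$ and hence $\lambda_n$ is an isomorphism; this takes care of the low-rank and exceptional types with few generators. For the classical families one proceeds by induction on the rank, using the restriction maps from Theorem \ref{theorem:restrictionmap} together with the commutative square \eqref{eq:diagram} and another application of Nakayama's lemma to propagate freeness from a subalgebra to the ambient algebra. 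The type $D$ case is more delicate because the restriction to a smaller $D$-type is not surjective on the top-degree generator $\wtx_{2r-1}$; this is circumvented by additionally restricting to an $A_{r-1}$ subalgebra, for which the already-established exterior algebra structure of $\Hbul(\Uq(\g''),k)$ detects the needed classes. The $E_6$ case is handled by combining a restriction from $E_7$ (to pick up the degrees $11$ and $23$ generators) with a restriction to $D_5$ (to control the remaining degree).

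Thus the final theorem is obtained by assembling Theorems \ref{theorem:free}, \ref{theorem:typeD}, \ref{theorem:typeE6}, and \ref{theorem:Uqcohomology}, together with Corollary \ref{corollary:typeD}; no new argument is required.
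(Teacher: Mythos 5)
Your proposal is correct and matches the paper's intent exactly: the paper states this theorem without a separate proof, presenting it as the assembly of Theorem \ref{theorem:free}, Theorem \ref{theorem:typeD} (with Corollary \ref{corollary:typeD}), Theorem \ref{theorem:typeE6}, and the reduction in Theorem \ref{theorem:Uqcohomology}, followed by base change via \eqref{eq:extendscalarsiso}. Your summary of how each case verifies the hypothesis on $\lambda$ also accurately reflects the upstream arguments, so no further comment is needed.
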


\subsection{The third cohomology group}

A famous theorem of Chevalley and Eilenberg states that $\opH^3(\Ug,\C) \neq 0$ \cite[Theorem 21.1]{Chevalley:1948}. They prove the non-vanishing of $\opH^3(\Ug,\C)$ by showing that the Killing form on $\g$ gives rise to a nonvanishing invariant 3-cochain in $\g$. Our analysis gives us:

\begin{corollary}
Let $\g$ be a finite-dimensional simple complex Lie algebra. Then
\[
\dim_k \opH^3(\Uqg,k) = 1.
\]
\end{corollary}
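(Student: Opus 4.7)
The plan is to read this off directly from the main theorem stated immediately above the corollary, together with an inspection of Table \ref{table:degrees}. By the main theorem, $\Hbul(\Uqg,k)$ is isomorphic as a graded $k$-algebra to an exterior algebra $\Lambda(z_{n_1},\ldots,z_{n_r})$, where $n_1 < n_2 < \cdots < n_r$ are the odd integers listed for the appropriate Lie type in Table \ref{table:degrees}, and the generator $z_{n_i}$ is homogeneous of degree $n_i$.

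I would then observe two facts from Table \ref{table:degrees}, valid across all Lie types: first, $n_1 = 3$ for every simple $\g$, so there is exactly one homogeneous generator of degree $3$; second, $n_2 \geq 5$ in every case, so the next smallest degree appearing among the generators is at least $5$.

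From these two facts it follows that in the exterior algebra $\Lambda(z_{n_1},\ldots,z_{n_r})$ the only monomials of degree $3$ are scalar multiples of $z_3$: any other nonzero monomial is a product of at least two distinct generators $z_{n_i}$ with each $n_i \geq 3$, hence has degree at least $n_1 + n_2 \geq 3 + 5 = 8 > 3$. Therefore the degree-$3$ component is one-dimensional, spanned by $z_3$, and $\dim_k \opH^3(\Uqg,k) = 1$.

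There is no real obstacle here — the entire content is contained in the main theorem and the tabulation of degrees; the corollary is a one-line consequence, and the only thing to verify is the case-free assertions $n_1 = 3$ and $n_2 \geq 5$, both of which are immediate from Table \ref{table:degrees}.
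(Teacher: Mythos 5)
Your proof is correct and matches the paper's (implicit) argument exactly: the paper offers no separate proof, treating the corollary as an immediate consequence of the main theorem together with the observation from Table \ref{table:degrees} that every simple type has exactly one generator in degree $3$ and all other generators (and hence all products of generators) live in strictly higher degree.
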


It is an interesting question whether the non-vanishing of $\opH^3(\Uqg,k)$ could also be established in a manner similar to that of Chevalley and Eilenberg, perhaps by using the non-degenerate inner product on $\Uqg$ constructed by Rosso \cite{Rosso:1990a}.

\section{Cohomology for the specializations \texorpdfstring{$\Ue$}{Ue}}

\subsection{Generic behavior}

Recall the set $S$ and the ring $\ca = S^{-1} \C[q,q^{-1}]$ defined in Section \ref{subsection:notation}. We call $\ve \in \C$ a \emph{bad root of unity} if $\ve \in \set{\pm 1}$ or if $\ve$ is the root of some polynomial in $S$. Define the set $\Cg \subset \C$ by
\[
\Cg = \set{ \ve \in \C^\times: \text{$\ve$ is not a bad root of unity}}.
\]
Then for all $\ve \in \Cg$, the field $\C$ is an $\ca$-algebra via the map $q \mapsto \ve$, and we can apply the results of Section \ref{subsection:basechange} with $\B = \ca$ and $\Gamma = \Ce$.

\begin{proposition} \label{proposition:cafingen}
The ring $\Hbul(\Uca,\ca)$ is a finitely-generated $\ca$-module.
\end{proposition}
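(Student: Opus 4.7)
The plan is to combine two ingredients: that each cohomology group $\opH^n(\Uca,\ca)$ is finitely generated as an $\ca$-module, and that $\opH^n(\Uca,\ca) = 0$ for all sufficiently large $n$. Together these force $\Hbul(\Uca,\ca)$ to be a finite direct sum of finitely-generated $\ca$-modules, and hence finitely generated over $\ca$. The first ingredient is immediate from Lemma \ref{lemma:finitelygenerated}, once one observes that $\ca$ is noetherian (indeed a PID, as a localization of $\C[q,q^{-1}]$).

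For the vanishing in high degree, the plan is to establish that $\Uca$ has finite global dimension, which forces $\opH^n(\Uca,\ca) = \Ext^n_{\Uca}(\ca,\ca) = 0$ for all $n > \mathrm{gldim}(\Uca)$. I would reuse the sequence of degenerations $\Uca = U_\ca^{(0)}, U_\ca^{(1)}, \ldots, U_\ca^{(2N)}$ introduced in the proof of Lemma \ref{lemma:noetherian}. The terminal algebra $U_\ca^{(2N)}$ is an iterated Ore extension over the torus $U_\ca^0$, and the torus is itself a finitely generated commutative $\ca$-algebra with an explicit free basis, as recorded in the proof of Lemma \ref{lemma:free}. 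Finite global dimension for the torus, combined with the standard bound $\mathrm{gldim}(R[x;\sigma,\delta]) \le \mathrm{gldim}(R) + 1$ applied once per Ore extension \cite[Chapter 7]{McConnell:1987}, yields a finite bound on $\mathrm{gldim}(U_\ca^{(2N)})$. Iterating the filtered-to-graded comparison $\mathrm{gldim}(R) \le \mathrm{gldim}(\mathrm{gr}\,R)$ through the $2N$ filtrations defining the degenerations then transfers this bound back to $\Uca$.

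The main obstacle is the finite global dimension step. In particular, the torus $U_\ca^0$ is not a Laurent polynomial ring, since it satisfies the quadratic relations $K_i^2 = 1 + (q_i - q_i^{-1}) K_i [K_i;0]$ of Lemma \ref{lemma:free}, so finite global dimension of the torus must be verified from its explicit structure over the PID $\ca$. A more direct alternative that avoids the filtered-to-graded comparison would be to construct a bounded resolution of the trivial $\Uca$-module by finitely generated free $\Uca$-modules, say of length $\dim\g$, adapted from known quantum Koszul-type resolutions of the trivial $\Uq$-module. Either route produces the essential input, namely finite projective dimension of $\ca$ over $\Uca$.
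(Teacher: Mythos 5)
Your proposal is correct in outline but takes a genuinely different route from the paper's at the one nontrivial step, namely the vanishing of $\opH^n(\Uca,\ca)$ in high degrees. The paper never touches the global dimension of $\Uca$: it instead invokes Poincar\'e duality $\opH^n(\Ue,\C) \cong \opH_{d-n}(\Ue,\C)$, $d = \dim_\C \g$, for the specialization at \emph{every} $\ve \in \Cg$ \cite[Corollary 3.2.2]{Chemla:2004}, deduces via Lemma \ref{lemma:ses} that $\opH^n(\Uca,\ca) \oca \Ce = 0$ for all $n > d$ and all such $\ve$, and then uses the structure theorem for finitely generated modules over the PID $\ca$ to conclude $\opH^n(\Uca,\ca) = 0$ for $n > d$. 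That argument is short given the machinery already in place and yields the sharp bound $d$. Your route --- pushing finite global dimension back through the De Concini--Procesi degenerations from the iterated Ore extension over the torus, in the spirit of \cite{Brown:1997} --- proves the stronger, independently interesting fact that $\Uca$ has finite global dimension, at the cost of the two obligations you correctly flag: finite global dimension of the torus $U_\ca^0$ over $\ca$ (which does hold: each factor $A_i \cong \ca[K,u]/(K^2 - (q_i - q_i^{-1})Ku - 1)$ is a smooth hypersurface by the Jacobian criterion, even at the points where $q_i - q_i^{-1}$ vanishes) and the applicability of the filtered-to-graded comparison of \cite[Chapter 7]{McConnell:1987}, which is fine since the degeneration filtrations are positive $\N$-filtrations with noetherian associated graded rings. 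One caution: your proposed shortcut via a bounded quantum Koszul-type resolution of the trivial module is not actually available --- the paper explicitly remarks in its final section that no quantum analogue of the Koszul complex has yet been constructed --- so you should commit to the degeneration argument (or simply adopt the paper's duality argument, which avoids the issue entirely).
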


\begin{proof}
For each $n \in \N$, the space $\opH^n(\Uca,\ca)$ is a finitely-generated $\ca$-module by Lemma \ref{lemma:finitelygenerated}. Set $d = \dim_\C \g$. Then for all $\ve \in \Cg$, the ring $\Hbul(\Ue,\C)$ satisfies the Poincar\'{e} duality $\opH^n(\Ue,\C) \cong \opH_{d-n}(\Ue,\C)$ by \cite[Corollary 3.2.2]{Chemla:2004}. In particular, $\opH^n(\Ue,\C) = 0$ for all $n > d$.  This implies by Lemma \ref{lemma:ses} with $B = \ca$ and $\Gamma = \Ce$ that $\opH^n(\Uca,\ca) \oca \Ce = 0$ for all $n > d$. Since $\ve \in \Cg$ was arbitrary, it follows for $n > d$ (e.g., using the fundamental theorem for finitely-generated modules over a principal ideal domain) that $\opH^n(\Uca,\ca) = 0$. Then $\Hbul(\Uca,\ca) = \bigoplus_{n=1}^{n=d} \opH^n(\Uca,\ca)$, so $\Hbul(\Uca,\ca)$ is a finitely-generated $\ca$-module.
\end{proof}

\begin{corollary} \label{corollary:Uegeneric}
For all but finitely many $\ve \in \Cg$, $\Hbul(\Uca,\ca) \oca \Ce \cong \Hbul(\Ue,\C)$, and for all such $\ve \in \Cg$, $\Hbul(\Ue,\C)$ is generated as an exterior algebra by homogeneous elements in the same odd degrees as for $\Hbul(\Ug,\C)$.
\end{corollary}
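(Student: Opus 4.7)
The plan is to combine Lemma \ref{lemma:ses} with Proposition \ref{proposition:cafingen} and the already-established ring structure of $\Hbul(\Uq,k)$. Since $\ca$ is a localization of the PID $\C[q,q^{-1}]$, it is itself a principal ideal domain, so Lemma \ref{lemma:ses} applies with $B = \ca$ and $\Gamma = \Ce$ to give, for each $n \in \N$ and each $\ve \in \Cg$, a short exact sequence
\[
0 \to \opH^n(\Uca,\ca) \oca \Ce \stackrel{\lambda_n}{\to} \opH^n(\Ue,\C) \to \Tor_1^\ca(\opH^{n+1}(\Uca,\ca),\Ce) \to 0.
\]
By Lemma \ref{lemma:notor} the Tor-term is the $(q-\ve)$-torsion in $\opH^{n+1}(\Uca,\ca)$. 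The first step is to exploit Proposition \ref{proposition:cafingen}: the total cohomology is a finitely generated $\ca$-module concentrated in a bounded range of degrees, so the structure theorem over the PID $\ca$ shows that the torsion in each $\opH^n(\Uca,\ca)$ is supported at only finitely many maximal ideals. Collecting the finitely many exceptional values of $\ve$ into a set $\Sigma_0 \subset \Cg$, the map $\lambda$ is a graded algebra isomorphism $\Hbul(\Uca,\ca) \oca \Ce \to \Hbul(\Ue,\C)$ for every $\ve \in \Cg \setminus \Sigma_0$.

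Next, to pin down the ring structure, I would apply Lemma \ref{lemma:ses} with $\Gamma = k$; flatness of $k$ over $\ca$ kills the Tor-term and yields $\Hbul(\Uca,\ca) \oca k \cong \Hbul(\Uq,k)$. By Theorem \ref{theorem:Uqcohomology} (together with Corollary \ref{corollary:typeD} and Theorem \ref{theorem:typeE6}), the target is an exterior algebra on odd-degree generators $\wt{z}_1,\ldots,\wt{z}_r$ in the degrees of Table \ref{table:degrees}. Lift these to homogeneous $z_1,\ldots,z_r \in \Hbul(\Uca,\ca)$ of the same odd degrees. As in the proof of Theorem \ref{theorem:Uqcohomology}, graded-commutativity of $\Hbul(\Uca,\ca)$ (valid because $\Uca$ is a Hopf algebra over the commutative characteristic-zero ring $\ca$) forces $z_i^2 = 0$, giving a well-defined graded $\ca$-algebra map $\varphi : \Lambda_\ca(z_1,\ldots,z_r) \to \Hbul(\Uca,\ca)$.

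The main technical step is then to show that $\varphi \oca \Ce$ is an isomorphism for all but finitely many $\ve$. By construction $\varphi \oca k$ is an isomorphism; since the source of $\varphi$ is $\ca$-free (hence torsion-free), it follows that $\varphi$ is injective with finitely generated torsion cokernel $C$, whose support is a finite set $\Sigma_1 \subset \Cg$. For $\ve$ outside $\Sigma := \Sigma_0 \cup \Sigma_1$, both $\Tor_1^\ca(C,\Ce)$ and $C \oca \Ce$ vanish, so $\varphi \oca \Ce$ is an isomorphism; composing with $\lambda$ identifies $\Hbul(\Ue,\C)$ with an exterior $\C$-algebra on generators in the correct odd degrees. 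The only real care needed is in tracking both exceptional sets $\Sigma_0$ and $\Sigma_1$ and using that the $\lambda_n$ assemble into a ring map; both are routine once finite generation of $\Hbul(\Uca,\ca)$ over the PID $\ca$ is in hand, and I do not anticipate any deeper obstacle.
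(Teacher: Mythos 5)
Your proposal is correct and follows essentially the same route as the paper: both arguments use finite generation of $\Hbul(\Uca,\ca)$ over the principal ideal domain $\ca$ to confine the torsion (of the cohomology and of the cokernel of the exterior-algebra map $\varphi$) to finitely many points $q = \ve$, and then deduce that $\lambda$ and $\varphi \oca \Ce$ are isomorphisms away from those points. The only cosmetic difference is that the paper first localizes $\ca$ at the torsion primes to form a ring $\cb$ over which the cohomology is free and pulls the generators $z_i$ back from $\Hbul(\Ua,\A)$, whereas you work directly over $\ca$ and lift the generators from $\Hbul(\Uq,k)$ after clearing denominators.
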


\begin{proof}
It follows from Proposition \ref{proposition:cafingen} and the fundamental theorem for finitely-generated modules over a principal ideal domain that $\Hbul(\Uca,\ca)$ is $(q-\ve)$-torsion free for all but finitely many $\ve \in \Cg$. Let $S' \subset \ca$ be the multiplicatively closed set generated by
\[
\{(q-\ve) \in \C[q] : \Hbul(\Uca,\ca) \text{ has $(q-\ve)$-torsion}\},
\]
and set $\cb = (S')^{-1} \ca$. Since $\cb$ is flat over $\ca$, we have $\Hbul(\Uca,\ca) \oca \cb \cong \Hbul(\Ucb,\cb)$ by Lemma \ref{lemma:ses}, and we deduce that $\Hbul(\Ucb,\cb)$ is a free $\cb$-module (all of the torsion has been eliminated by the choice of denominator set). Since the ring $\A$ is a localization of $\cb$, we also have $\Hbul(\Ucb,\cb) \ocb \A \cong \Hbul(\Ua,\A)$ by Lemma \ref{lemma:ses}, and we can choose the odd-degree generators $z_1,\ldots,z_r$ for the exterior algebra $\Hbul(\Ua,\A)$ to be elements of $\Hbul(\Ucb,\cb)$. Then there exists a map of free $\cb$-algebras $\varphi: \Lambda(z_1,\ldots,z_r) \rightarrow \Hbul(\Ucb,\cb)$.

Set $W = \Hbul(\Ucb,\cb)/\im(\varphi)$. Since $\Lambda(x_1,\ldots,x_r)$ and $\Hbul(\Ucb,\cb)$ are each free of the same finite rank over the principal ideal domain $\cb$, $W$ is a finitely-generated torsion $\cb$-module. Let $\ve \in \Cg$ such that $(q-\ve) \notin S'$ and $W$ has no $(q-e)$-torsion. Then $\Tor_1^{\cb}(W,\Ce) = \Tor_1^{\cb}(W,\cb/(q-\ve)\cb) = 0$ by Lemma \ref{lemma:notor}, so it follows from the long exact sequence for $\Tor_1^{\cb}(-,\Ce)$ applied to the short exact sequence
\[
0 \rightarrow \Lambda(z_1,\ldots,z_r) \stackrel{\varphi}{\rightarrow} \Hbul(\Ucb,\cb) \rightarrow W \rightarrow 0
\]
that the algebra map $\varphi \ocb \Ce: \Lambda(x_1,\ldots,x_r) \ocb \Ce \rightarrow \Hbul(\Ucb,\cb) \ocb \Ce$ is injective. Then by dimension comparison $\varphi \ocb \Ce$ must also be surjective, hence an algebra isomorphism. Thus, the conclusion of the corollary holds for all $\ve \in \Cg$ such that $\Hbul(\Uca,\ca)$ and $W$ are $(q-\ve)$-torsion free, and fails for only finitely-many $\ve \in \Cg$.
\end{proof}

While Corollary \ref{corollary:Uegeneric} states for almost all values $\ve \in \Cg$ that $\Hbul(\Ue,\C)$ is an exterior algebra over an $r$-dimensional graded subspace, it unfortunately does not give any indication of the values for which this condition fails. We can at least say that the only values for which $\Hbul(\Ue,\C)$ might not be an exterior algebra are those $\ve$ that are algebraic over $\Q$. Indeed, let $B = S^{-1}\Q[q,q^{-1}]$, with $S$ as defined in Section \ref{subsection:notation}. Then for each $n \in \N$, the space $\opH^n(U_B,B)$ is a finitely-generated $B$-module by Lemma \ref{lemma:finitelygenerated}, and $\Hbul(U_B,B) \otimes_B \ca \cong \Hbul(\Uca,\ca)$. This shows that $\Hbul(\Uca,\ca)$ has $(q-\ve)$-torsion if and only if there exists an irreducible polynomial $f \in \Q[q]$ such that $(q-\ve)$ divides $f$ in $\C[q]$ and $\Hbul(U_B,B)$ has $f$-torsion. We summarize this discussion in the following proposition:

\begin{proposition}
If $\ve \in \Cg$ is transcendental over $\Q$, then $\Hbul(\Ue,\C)$ is an exterior algebra generated by homogeneous elements in the same odd degrees as for $\Hbul(\g,\C)$.
\end{proposition}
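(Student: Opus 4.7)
The plan is to mirror the strategy of Corollary~\ref{corollary:Uegeneric}, but to arrange matters so that every denominator and torsion annihilator arising in the argument is a polynomial in $\Q[q]$. Transcendence of $\ve$ over $\Q$ will then automatically guarantee that none of them vanishes at $\ve$.

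First I would use that $B = S^{-1}\Q[q,q^{-1}]$ is a PID and that $\Hbul(U_B,B)$ is finitely generated over $B$ by Lemma~\ref{lemma:finitelygenerated} to invert a nonzero $f \in \Q[q]$ annihilating the torsion submodule. Setting $B_1 = B[f^{-1}]$, the algebra $\Hbul(U_{B_1},B_1) \cong \Hbul(U_B,B) \otimes_B B_1$ is $B_1$-free, and by Lemma~\ref{lemma:ses} its base change to $\Q(q)$, respectively to $k = \C(q)$, recovers $\Hbul(\Uq,\Q(q))$ and $\Hbul(\Uq,k)$.

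Next I would produce exterior-algebra generators defined over $\Q(q)$. By Theorem~\ref{theorem:Uqcohomology}, $\Hbul(\Uq,k) \cong \Lambda_k(\eta_1,\dots,\eta_r)$ with $\eta_i$ of odd degree $n_i$, and the faithful flatness of $k$ over $\Q(q)$ gives $\dim_{\Q(q)} \opH^n(\Uq,\Q(q)) = \dim_k \opH^n(\Uq,k)$ for every $n$. I can therefore choose $\xi_1,\dots,\xi_r \in \Hbul(\Uq,\Q(q))$ of degrees $n_1,\dots,n_r$ whose classes span the indecomposable quotient of $\Hbul(\Uq,\Q(q))$. Graded-commutativity forces $\xi_i^2 = 0$, and the induced map $\Lambda_{\Q(q)}(\xi_1,\dots,\xi_r) \to \Hbul(\Uq,\Q(q))$ becomes an isomorphism after extending scalars to $k$, hence is itself an isomorphism. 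Clearing $\Q(q)$-denominators, the $\xi_i$ in fact lie in $\Hbul(U_{B_2},B_2)$ for $B_2 = B_1[g^{-1}]$ with a suitable $g \in \Q[q]$, and the $B_2$-algebra map $\varphi: \Lambda_{B_2}(\xi_1,\dots,\xi_r) \to \Hbul(U_{B_2},B_2)$ is then an injection of finitely generated free $B_2$-modules of the same rank, with torsion cokernel $W$ annihilated by some nonzero $h \in \Q[q]$.

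To conclude I specialize at the transcendental $\ve$. Since $f(\ve)g(\ve)h(\ve) \neq 0$ by transcendence, $q \mapsto \ve$ induces a well-defined injection $B_2 \hookrightarrow \Q(\ve) \subseteq \C$ with $\Q(\ve) = \mathrm{Frac}(B_2)$, so that $\Ce$ is flat over $B_2$ because $\C$ is a $\Q(\ve)$-vector space. Lemma~\ref{lemma:ses} then yields $\Hbul(U_{B_2},B_2) \otimes_{B_2} \Ce \cong \Hbul(\Ue,\C)$ as graded algebras, and tensoring the short exact sequence $0 \to \Lambda_{B_2}(\xi_i) \to \Hbul(U_{B_2},B_2) \to W \to 0$ with the flat module $\Ce$ preserves exactness; since $W \otimes_{B_2} \Ce$ is annihilated by the unit $h(\ve) \in \C^\times$ it vanishes, yielding $\Hbul(\Ue,\C) \cong \Lambda_\C(\overline{\xi}_1,\dots,\overline{\xi}_r)$. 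I expect the main obstacle to be the rational descent of the generators in the second step, where the dimension argument on the indecomposable quotient must be handled with some care in the degrees that appear with multiplicity (e.g.\ degree $2r-1$ for type $D_r$ with $r$ even).
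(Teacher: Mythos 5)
Your proof is correct and follows essentially the same route as the paper: descend to the rational form $B = S^{-1}\Q[q,q^{-1}]$, where finite generation (Lemma \ref{lemma:finitelygenerated}) forces all torsion to be annihilated by nonzero polynomials in $\Q[q]$ and hence to be supported at points algebraic over $\Q$. If anything, your version is more complete than the paper's one-paragraph argument, which only records that $\Hbul(\Uca,\ca)$ itself is $(q-\ve)$-torsion-free for transcendental $\ve$, whereas the conclusion of Corollary \ref{corollary:Uegeneric} also needs the cokernel $W$ of the exterior-algebra map to be $(q-\ve)$-torsion-free; your rational descent of the generators $\xi_i$ (including the care taken with repeated degrees via the indecomposable quotient) supplies exactly that missing piece.
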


\subsection{Roots of unity} \label{subsection:rootsofunity}

Let $p$ be a prime, and let $h$ be the Coxeter number of the root system associated to $\g$. We can show that the conclusion of Corollary \ref{corollary:Uegeneric} holds for $\Ue$ provided $\ve$ is a primitive $p$-th root of unity and $p > 3(h-1)$.

\begin{theorem} \label{theorem:rootofunity}
Let $\ve \in \C$ be a primitive $p$-th root of unity with $p > 3(h-1)$. Then $\Hbul(\Ue,\C)$ is an exterior algebra generated by homogeneous elements in the same odd degrees as for $\Hbul(\Ug,\C)$.
\end{theorem}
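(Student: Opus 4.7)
The plan is to adapt the strategy of Sections \ref{subsection:Uafree}--\ref{subsection:ringstructure} to a mixed-characteristic setting that places $\Ue$ alongside the characteristic-$p$ universal enveloping algebra of $\g$. I would replace the local ring $\A$ by a complete discrete valuation ring $R$ of mixed characteristic $(0,p)$ containing a primitive $p$-th root of unity $\zeta$ with $\zeta \equiv 1 \pmod{\mathfrak{m}_R}$---for example, the ring of integers of $\Q_p(\zeta)$---viewed as an $\ca$-algebra via $q \mapsto \zeta$. The hypothesis $p > 3(h-1)$ ensures that $\zeta$ is not a bad root of unity, so no generator of $S$ vanishes at $\zeta$ and this specialization is well-defined. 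Fix an embedding $K = \mathrm{Frac}(R) \hookrightarrow \C$ sending $\zeta$ to $\ve$, let $k_R = R/\mathfrak{m}_R$, and set $U_R = \Ucz \ocz R$.

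Two applications of Lemma \ref{lemma:ses} will then bridge the cohomology rings of $\Ue$, $U_R$, and $\g_{k_R}$. Since $\C$ is flat over $R$, the first yields $\Hbul(U_R, R) \otimes_R \C \cong \Hbul(\Ue, \C)$. Because $\zeta \equiv 1 \pmod{\mathfrak{m}_R}$, the specialization $U_R \otimes_R k_R$ is the $k_R$-analog of $U_1$, and the mod-$p$ variant of Lemma \ref{lemma:U1toUgiso} (valid since $p > 2$) identifies $\Hbul(U_R \otimes_R k_R, k_R)$ with $\Hbul(\g_{k_R}, k_R)$. For $p > 3(h-1)$, the classical Chevalley--Eilenberg computation goes through over $k_R$ and produces an exterior algebra on generators in the same odd degrees as $\Hbul(\Ug, \C)$, so $\dim_{k_R} \Hbul(\g_{k_R}, k_R) = \dim_\C \Hbul(\Ug, \C)$. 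Combining the flat base change, the injection from Lemma \ref{lemma:ses} with $B = R$ and $\Gamma = k_R$, and upper semicontinuity of fiber dimensions for the finitely generated $R$-module $\Hbul(U_R, R)$ (finite generation coming from the analog of Lemma \ref{lemma:finitelygenerated} together with the $R$-freeness of $U_R$ provided by Lemma \ref{lemma:free}), one obtains
\[
\dim_\C \Hbul(\Ue, \C) \;=\; \dim_K \Hbul(U_R, R) \otimes_R K \;\leq\; \dim_{k_R} \Hbul(U_R, R) \otimes_R k_R \;\leq\; \dim_{k_R} \Hbul(\g_{k_R}, k_R) \;=\; \dim_\C \Hbul(\Ug, \C).
\]
The reverse inequality $\dim_\C \Hbul(\Ue, \C) \geq \dim_\C \Hbul(\Ug, \C)$ comes from the injection $\Hbul(\Uca,\ca) \oca \Ce \hookrightarrow \Hbul(\Ue, \C)$ of Lemma \ref{lemma:ses} combined with the fact that $\Hbul(\Uca, \ca)$ has generic $\ca$-rank equal to $\dim_\C \Hbul(\Ug, \C)$ by Corollary \ref{corollary:Uegeneric} and the main theorem on $\Hbul(\Uq, k)$.

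Equality of dimensions forces the $\Tor_1$ terms in every relevant universal coefficient sequence to vanish, so $\Hbul(U_R, R)$ is $R$-free of rank $\dim_\C \Hbul(\Ug, \C)$. Lifting homogeneous odd-degree generators from $\Hbul(\g_{k_R}, k_R)$ up to $\Hbul(U_R, R)$ and repeating the graded-commutativity plus Nakayama argument of Theorem \ref{theorem:Uqcohomology} (with $R$ in place of $\A$) produces an exterior algebra isomorphism $\Lambda_R(z_1, \ldots, z_r) \cong \Hbul(U_R, R)$, and base-changing to $\C$ gives the desired exterior algebra structure on $\Hbul(\Ue, \C)$.

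The main obstacle will be verifying the two characteristic-$p$ inputs: the mod-$p$ version of Lemma \ref{lemma:U1toUgiso} and the exterior algebra structure of $\Hbul(\g_{k_R}, k_R)$ over $k_R$ with the same odd degrees as over $\C$. The explicit bound $p > 3(h-1)$ is presumably calibrated to ensure that (i) modular invariant theory for the Weyl groups used in the Chevalley--Eilenberg computation and in Section \ref{subsection:restriction} is torsion-free at $p$, so that the restriction-map computations go through verbatim over $k_R$, and (ii) the De Concini--Kac integral form $\Ucz$ behaves well under reduction modulo $p$, so that the relevant PBW bases, root vectors, and semisimplicity assertions survive the reduction.
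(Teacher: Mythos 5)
Your proposal is correct in outline and shares the paper's essential architecture --- a mixed-characteristic discrete valuation ring sitting between $\Qe$ and $\Fp$, with $U_R \otimes_R k_R$ identified (via the semisimplicity of $k_R[(\Z/2\Z)^r]$ for $p$ odd, as in Lemma \ref{lemma:U1toUgiso}) with a central extension of $U(\gfp)$ --- but it diverges in a worthwhile way at the final step. The paper localizes $\Z[\ve]$ at the prime above $p$ (completion is unnecessary but harmless) and then \emph{re-runs the entire inductive restriction-map machinery} of Sections \ref{subsection:Uafree}--\ref{subsection:typeE6} over that DVR to prove freeness of $\Hbul(U_{\cz'},\cz')$. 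You instead sandwich the dimensions: the lower bound $\dim_\C \opH^n(\Ue,\C) \geq \operatorname{rank}_\ca \opH^n(\Uca,\ca) = \dim_k \opH^n(\Uq,k) = \dim_\C \opH^n(\Ug,\C)$ exploits the already-proved main theorem on $\Hbul(\Uq,k)$, an input that was not available when the generic case was being established; combined with the semicontinuity upper bound over $R$ this forces freeness and the isomorphism of $\lambda$ in every degree, after which the Nakayama argument of Theorem \ref{theorem:Uqcohomology} goes through verbatim. This is a genuine simplification over the paper's ``argue as in Sections \ref{subsection:Uafree}--\ref{subsection:typeE6}.'' Two corrections to your flagged obstacles: the exterior-algebra structure of $\Hbul(\gfp,\Fp)$ for $p > 3(h-1)$ is not ``the classical Chevalley--Eilenberg computation going through'' --- it is a substantive theorem of Friedlander and Parshall, which is exactly what the bound $3(h-1)$ is calibrated for (your guesses about modular invariant theory of Weyl groups and reduction of the integral form are not where the bound comes from). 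Also, for $R$ to be a $\cz$-algebra you need the generators of $S$ to map to \emph{units} of $R$, not merely to nonzero elements of $K$; this holds because $\phi_\ell(\zeta) \equiv \phi_\ell(1) \in \set{1,2,3} \pmod{\mathfrak{m}_R}$ and the residue characteristic satisfies $p \geq 5$.
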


\begin{proof}[Proof sketch]
The theorem is established by a sequence of arguments completely analogous to those used for the case when the parameter of the quantized enveloping algebra is an indeterminant, except that instead of relating $\Hbul(\Ue,\C)$ to Lie algebra cohomology in characteristic zero, we relate $\Hbul(\Ue,\C)$ to Lie algebra cohomology in characteristic $p$. Let $\Uq'$ be the algebra over $\Q(q)$ defined by the same generators and relations as for $\Uq$, and let $\Ue'$ be the algebra over $\Qe$ obtained by replacing $q$ in the definition of $\Uq'$ by $\ve$. Then $\Ue' \otimes_{\Qe} \C \cong \Ue$ and $\Hbul(\Ue',\Qe) \otimes_{\Qe} \C \cong \Hbul(\Ue,\C)$, so it suffices to show that $\Hbul(\Ue',\Qe)$ is an exterior algebra generated by homogeneous elements in the same odd degrees as for $\Hbul(\Ug,\C)$.

Let $\Fp$ be the field with $p$ elements, and consider the map $\pi: \Z[q] \rightarrow \Fp$ that takes $q \mapsto 1$. Let $\phi_p(q) = q^{p-1} + \cdots + q + 1$ be the $p$-th cyclotomic polynomial. Then $\phi_p(1) = p$, and $\pi$ factors through a map $\pi': \Z[\ve] \cong \Z[q]/(\phi_p) \rightarrow \Fp$. Let $\cz'$ be the localization of $\Z[\ve]$ at the maximal ideal $\ker \pi'$. The ring $\Z[\ve]$ is a noetherian Dedekind domain (because the ring of integers in an algebraic number field is always a Dedekind domain), hence so is the localization $\cz'$. A local Dedekind domain is a principal ideal domain, so we can apply the results of Sections \ref{subsection:resolution}--\ref{subsection:basechange} to the $\cz$-algebra $\cz'$, its quotient field $\Qe$, and its residue field $\Fp$.

Let $\gfp$ be the Lie algebra over $\Fp$ obtained by extension of scalars from a Chevalley basis for $\g$. Then $U_{\cz'} \otimes_{\cz'} \Fp$ is a central extension of the universal enveloping algebra $U(\gfp)$ by the group algebra over $\Fp$ for the finite group $G = (\Z/2\Z)^r$. Since $p$ is odd, the group algebra $\Fp G$ is a semisimple ring. Then as in Lemma \ref{lemma:U1toUgiso}, we get $\Hbul(U_{\cz'} \otimes_{\cz'} \Fp,\Fp) \cong \Hbul(U(\gfp),\Fp)$. Since $p > 3(h-1)$, the latter ring is an exterior algebra generated by homogeneous elements in the same odd degrees as for $\Hbul(\Ug,\C)$ \cite[Theorem 1.2]{Friedlander:1986a}. Now one argues as in Sections \ref{subsection:Uafree}--\ref{subsection:typeE6} to show that $\Hbul(U_{\cz'},\cz')$ is a finitely-generated free $\cz'$-module, and that $\Hbul(U_{\cz'},\cz')$ and $\Hbul(U_{\cz'},\cz') \otimes_{\cz'} \Qe \cong \Hbul(\Ue',\Qe)$ are exterior algebras over graded subspaces concentrated in the correct odd degrees.
\end{proof}

The lower bound of $3(h-1)$ in the above theorem is not sharp. The bound is made in order to guarantee that the cohomology ring $\Hbul(U(\gfp),\Fp)$ is an exterior algebra generated in the correct degrees. We have conducted computer calculations to compute the structure of $\Hbul(U(\gfp),\Fp)$ when $p$ is small and when $\g$ is of type $A_1$, $A_2$, $B_2$, or $G_2$, and have determined in these cases that it is sufficient to assume $p > h$. Though we suspect that $\Hbul(U(\gfp),\Fp)$ should be an exterior algebra provided only $p>h$, we have no proof of this claim at this time.

\subsection{Conjectures}

If $\g = \mathfrak{sl}_2(\C)$ and $\ve \in \Cg$, then it follows from Poincar\'{e} duality and \cite[Remark 7.17]{Masuoka:2008} that $\Hbul(\Ue,\C)$ is an exterior algebra generated by a vector in degree $3$, but for higher ranks it is not clear (at least, it is not clear to the author) how to proceed in general, even for specific values of $\ve \in \C^\times$. If $\ve \in \C^{\times}$ is not a root of unity, then it is well-known that the categories of finite-dimensional type-$1$ modules for $\Uq$ and $\Ue$ are both equivalent to the category of finite-dimensional $\g$-modules. The BGG categories for $\Uq$ and $\Ue$ are also both equivalent to the integral block of the BGG category $\mathcal{O}$ for $\g$ \cite[Remark 6.3]{Andersen:2011}. One might hope then to extend this equivalence to a larger subcategory of infinite-dimensional $\Ue$-modules containing the trivial module, and thereby prove the following conjecture:

\begin{conjecture} \label{conjecture:nonroot}
Suppose $\ve \in \Cq$ is not a root of unity. Then $\Hbul(\Ue,\C)$ is an exterior algebra generated in the same odd degrees as for $\Hbul(\Ug,\C)$.
\end{conjecture}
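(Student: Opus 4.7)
The plan is to imitate the strategy of Section 4 using the integral form localized at $q-\ve$ instead of at $q-1$. Since $\ve \in \C^\times$ is not a root of unity, $\ve \in \Cg$ and no polynomial in $S$ vanishes at $\ve$, so $\A_\ve := \C[q]_{(q-\ve)}$ is a $\cz$-algebra which is a local principal ideal domain with quotient field $k = \Cq$ and residue field $\Ce \cong \C$. Setting $U_{\A_\ve} := \Ucz \ocz \A_\ve$, Lemma \ref{lemma:ses} yields, for each $n \in \N$, an isomorphism $\opH^n(U_{\A_\ve},\A_\ve) \otimes_{\A_\ve} k \cong \opH^n(\Uq,k)$ and a short exact sequence
\[
0 \to \opH^n(U_{\A_\ve},\A_\ve) \otimes_{\A_\ve} \Ce \to \opH^n(\Ue,\C) \to \Tor_1^{\A_\ve}(\opH^{n+1}(U_{\A_\ve},\A_\ve),\Ce) \to 0.
\]
By Lemmas \ref{lemma:finitelygenerated} and \ref{lemma:tor}, the conjecture reduces to showing that each $\opH^n(U_{\A_\ve},\A_\ve)$ is $\A_\ve$-free, for then the proof of Theorem \ref{theorem:Uqcohomology} applies verbatim to lift the odd-degree generators of $\Hbul(\Uq,k)$ to $\Hbul(U_{\A_\ve},\A_\ve)$ and transfer the exterior-algebra structure onto $\Hbul(\Ue,\C) \cong \Hbul(U_{\A_\ve},\A_\ve) \otimes_{\A_\ve} \Ce$. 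Since the $\A_\ve$-rank of $\opH^n(U_{\A_\ve},\A_\ve)$ must equal $\dim_k \opH^n(\Uq,k)$ by the main theorem, this freeness is in turn equivalent to the equality $\dim_\C \opH^n(\Ue,\C) = \dim_k \opH^n(\Uq,k)$ for every $n$, with the inequality $\geq$ automatic.

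To establish the matching upper bound I would exploit the category equivalence suggested in the discussion preceding the conjecture. For $\ve$ not a root of unity, the integral block of the BGG category $\mathcal{O}_\ve$ is equivalent to the corresponding block of the classical category $\mathcal{O}_\g$, and the trivial module $\C$ lies in this block. The plan is to extend this equivalence to a larger subcategory $\mathcal{C}_\ve \supseteq \mathcal{O}_\ve$ of $\Ue$-modules that contains a genuine $\Ue$-projective resolution of $\C$, and then transfer the Ext computation across the equivalence to the classical side, where the Koszul resolution $U(\g) \otimes_\C \Lambda^\bullet(\g) \to \C$ identifies $\Ext^\bullet_{U(\g)}(\C,\C)$ with $\Hbul(\g,\C)$.

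The principal obstacle is that the known category equivalences at the level of finite-dimensional modules and of the BGG category $\mathcal{O}$ do not include modules large enough to serve as projectives in the ambient module category; free $\Ue$-modules lie outside $\mathcal{O}_\ve$, and consequently BGG-categorical Ext need not agree with $\Ext^\bullet_{\Ue}(\C,\C)$. Extending the equivalence therefore requires genuinely new categorical input---for instance, a version of the equivalence on an appropriate category of pro- or ind-objects, or else a direct construction of a quantum Koszul resolution $\Ue \otimes_\C \Lambda^\bullet(V) \to \C$ built from the De~Concini--Kac PBW basis, together with a filtration/degeneration argument identifying its quantum Chevalley--Eilenberg cohomology with the classical $\Hbul(\g,\C)$. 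Controlling the intricate dependence of the quantum commutation constants on $\ve$ at an algebraic value is the core difficulty, which is precisely why the transcendental case can be handled by the preceding proposition while the general case is left open.
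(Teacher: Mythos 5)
The statement you have been asked to prove is stated in the paper as a \emph{conjecture}, and the paper offers no proof of it; your proposal does not supply one either, as you yourself acknowledge in your final paragraph. The first half of your argument is correct and is essentially the framework the paper already sets up in Section 5: since $\ve$ is not a root of unity, no element of $S$ vanishes at $\ve$, so $\C[q]_{(q-\ve)}$ is a local principal ideal domain over $\cz$ with quotient field $k$ and residue field $\Ce$, and Lemma \ref{lemma:ses} together with Lemmas \ref{lemma:finitelygenerated} and \ref{lemma:tor} reduces the conjecture to the $(q-\ve)$-torsion-freeness of $\Hbul(\Uca,\ca)$, equivalently to the equality $\dim_\C \opH^n(\Ue,\C) = \dim_k \opH^n(\Uq,k)$ for all $n$, with $\geq$ automatic. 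This is exactly the content of the discussion surrounding Corollary \ref{corollary:Uegeneric} and the proposition on transcendental $\ve$: the paper shows the torsion can occur for at most finitely many $\ve \in \Cg$, all of them algebraic over $\Q$, but has no means of excluding a given algebraic non-root-of-unity.

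The genuine gap is therefore the matching upper bound, and your proposed mechanism for it --- extending the equivalence between the quantum and classical BGG categories $\mathcal{O}$ to a subcategory large enough to compute $\Ext^\bullet_{\Ue}(\C,\C)$ --- is precisely the heuristic the paper itself floats in the paragraph immediately preceding the conjecture, and it is not carried out there or in your proposal. As you correctly observe, free $\Ue$-modules do not lie in category $\mathcal{O}$, so the known equivalences do not transport the relevant Ext groups, and no quantum Koszul resolution is currently available; naming these obstacles does not overcome them. In short: your reduction is sound and consistent with the paper, but the proposal proves nothing beyond what the paper already establishes (the transcendental case and the ``all but finitely many'' statement), and the conjecture remains open.
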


In establishing the fact that the inclusion map $\Lambda^\bullet(\g^*)^\g \rightarrow \Lambda^\bullet(\g^*)$ induces an isomorphism $\Lambda(\g^*)^\g \cong \Hbul(\Ug,\C)$, one uses the complete-reducibility of finite-dimensional $\g$-modules to conclude that $\Lambda^\bullet(\g^*)^\g$ is a $\g$-module summand in the space of cocycles in $\Lambda^\bullet(\g^*)$, and hence that $\Lambda^\bullet(\g^*)^\g \cong \Hbul(\Ug,\C)$. If one could explicitly construct a finite-dimensional complex $P$ computing $\Hbul(\Ue,\C)$ such that each term in $P$ was a $\Ue$-module (i.e., a quantum version of the Koszul complex), then one could try to imitate the classical approach, at least for $\ve$ not a root of unity, to try to understand the structure of the cohomology ring $\Hbul(\Ue,\C)$.

While no one has yet constructed a quantum analogue for the Koszul complex, it may be possible to find a suitable substitute by considering the sequence of May spectral sequences arising from the algebra degenerations \eqref{eq:degenerations} of De Concini and Procesi. Indeed, we have successfully used this approach in \cite[\S 5.4]{Drupieski:2011b} to help deduce the ring structure of the cohomology ring for the nilpotent subalgebra $\Ue^-$.

Finally, based on the results of Theorem \ref{theorem:rootofunity}, and on the comments made in the last paragraph of Section \ref{subsection:rootsofunity}, we offer the following conjecture for the structure of $\Hbul(\Ue,\C)$ when $\ve \in \C$ is a root of unity:

\begin{conjecture}
Let $\ell$ be an odd positive integer, with $\ell$ coprime to $3$ if $\g$ is of type $G_2$. Let $\ve \in \C$ be a primitive $\ell$-th root of unity, and suppose $\ell > h$. Then $\Hbul(\Ue,\C)$ is an exterior algebra generated in the same odd degrees as for $\Hbul(\Ug,\C)$.
\end{conjecture}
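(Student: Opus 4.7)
The plan is to extend Theorem~\ref{theorem:rootofunity} along two axes simultaneously: from prime $\ell$ to general odd $\ell$ (coprime to $3$ in type $G_2$), and from the bound $p > 3(h-1)$ down to $p > h$. The overall template remains the one developed in Sections~\ref{subsection:integralform}--\ref{subsection:ringstructure}: reduce via $\Ue \cong \Ue' \otimes_{\Qe} \C$ to working over $\Qe = \Q(\ve)$; choose a maximal ideal $\mathfrak{m}$ of $\Z[\ve]$ whose residue field $\F$ has positive characteristic; localize to obtain a local principal ideal domain $\cz' = \Z[\ve]_\mathfrak{m}$; and apply Lemma~\ref{lemma:ses} with $B = \cz'$ and $\Gamma \in \{\Qe, \F\}$ to transfer the desired structure from $\Hbul(U_{\cz'} \otimes_{\cz'} \F, \F)$ back to $\Hbul(\Ue', \Qe)$.

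The case $\ell = p^a$ a prime power with $p > h$ would use exactly the same maximal ideal as in Theorem~\ref{theorem:rootofunity}: since $\phi_{p^a}(1) = p$, the specialization $\ve \mapsto 1$ factors through $\Fp$, and $U_{\cz'} \otimes_{\cz'} \Fp$ is again a central extension of $U(\gfp)$ by a semisimple group algebra (the oddness of $\ell$ forces $p$ to be odd). The input required is that $\Hbul(U(\gfp),\Fp)$ be an exterior algebra on generators in the degrees of Table~\ref{table:degrees} whenever $p > h$, strengthening the bound $p > 3(h-1)$ of Friedlander-Parshall. The low-rank computer calculations noted in Section~\ref{subsection:rootsofunity} provide encouraging evidence; a proof would likely combine a refined Hochschild-Serre analysis for the restricted enveloping subalgebra $u(\gfp) \hookrightarrow U(\gfp)$ with the May-type spectral sequences arising from the PBW filtration, and should transport to the quantum setting through the machinery of Sections~\ref{subsection:Uafree}--\ref{subsection:ringstructure}.

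The genuine obstacle is the case when $\ell$ has at least two distinct prime factors, for then $\phi_\ell(1) = 1$, no rational prime provides a reduction sending $\ve$ to $1$, and $U_{\cz'} \otimes_{\cz'} \F$ is itself a quantum group at a nontrivial root of unity in positive characteristic rather than a (near) enveloping algebra. The most natural substitute is to invoke the De~Concini-Kac-Procesi central Hopf subalgebra $Z_0 \subset \Ue$, whose existence is ensured by the hypotheses on $\ell$, and the associated Hopf algebra extension $Z_0 \hookrightarrow \Ue \twoheadrightarrow u_\ve$ with small quantum group quotient $u_\ve$. The resulting Lyndon-Hochschild-Serre spectral sequence
\[
E_2^{i,j} = \opH^i(u_\ve, \opH^j(Z_0, \C)) \Rightarrow \opH^{i+j}(\Ue, \C)
\]
reduces the problem to $\opH^\bullet(Z_0,\C) \cong \Lambda^\bullet(\Lie(G^*)^*)$ (computed via the Koszul resolution of the polynomial ring $Z_0 \cong \C[G^*]$) together with the cohomology of the small quantum group $u_\ve$, which is tractable for $\ell > h$. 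I expect the hardest step to be proving collapse of this spectral sequence at $E_2$ and ruling out multiplicative extensions, so as to assemble the pieces into an exterior algebra in exactly the degrees of Table~\ref{table:degrees}; this is the genuinely new ingredient required beyond the techniques already present in this paper.
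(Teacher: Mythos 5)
The statement you are addressing is labeled a \emph{conjecture} in the paper: the author offers no proof, and explicitly states in Section \ref{subsection:rootsofunity} that even the prerequisite fact that $\Hbul(U(\gfp),\Fp)$ is an exterior algebra in the correct degrees for all $p > h$ is something for which ``we have no proof of this claim at this time.'' Your proposal is therefore not being measured against a proof in the paper, and, as written, it is a research plan rather than a proof: both of its branches terminate in steps you acknowledge you cannot carry out. In the prime-power branch, the entire argument rests on strengthening the Friedlander--Parshall bound from $p > 3(h-1)$ to $p > h$; this is precisely the open problem the paper flags, supported only by low-rank computer calculations, so nothing is proved here beyond what Theorem \ref{theorem:rootofunity} already gives.

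In the composite-$\ell$ branch there is, beyond incompleteness, a concrete error in your expectation. You propose the Lyndon--Hochschild--Serre spectral sequence for $Z_0 \hookrightarrow \Ue \twoheadrightarrow u_\ve$ and say you expect the hard step to be proving collapse at $E_2$. That collapse cannot happen. For $\ell > h$ the Ginzburg--Kumar computation gives $\opH^{2i}(u_\ve,\C) \cong \C[\mathcal{N}]_i \neq 0$ for all $i \geq 0$, so the row $E_2^{\bullet,0} = \opH^\bullet(u_\ve,\C)$ is nonzero in arbitrarily large even degrees, whereas the abutment $\opH^n(\Ue,\C)$ vanishes for $n > \dim_\C \g$ by the Poincar\'e duality cited in Proposition \ref{proposition:cafingen}. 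Hence there must be infinitely many nonvanishing higher differentials, and the real difficulty is to control this massive cancellation --- a problem of a different order than ruling out multiplicative extensions. Your diagnosis of \emph{why} the paper's method fails for composite $\ell$ (namely $\phi_\ell(1)=1$, so no reduction sends $\ve$ to $1$ modulo a rational prime) is correct and matches the reason the paper stops at prime order, but the proposed substitute does not close the gap.
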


\section*{Ackowledgements}

The author expresses his gratitude to Brian Parshall for suggesting this problem, and to Nick Kuhn for useful conversations on the cohomology of Lie groups.


\providecommand{\bysame}{\leavevmode\hbox to3em{\hrulefill}\thinspace}

\end{document}